\documentclass[12pt]{amsart}
\usepackage[utf8]{inputenc}

\title[Compressed sensing photoacoustic tomography]{Recovery guarantees for compressed sensing photoacoustic tomography}
\author[A.\ Felisi]{Alessandro Felisi}
\address{MaLGa Center, Department of Mathematics, University of Genoa, Via Dodecaneso 35, 16146 Genova, Italy.}
\email{alessandro.felisi@edu.unige.it}

\date{\today}

\usepackage{amsmath, amssymb, amsfonts, amsthm, amsopn, cancel}
\usepackage{graphicx,tikz}
\usepackage[all]{xy}
\usepackage{amsmath}
\usepackage{bbm}
\usepackage{multirow, longtable, makecell, caption, array, enumitem}
\usepackage{amssymb}
\usepackage{mathtools}
\mathtoolsset{showonlyrefs}
\usepackage{bookmark}
\usepackage{hyperref}
\usepackage[font=small,labelfont=bf]{caption}
\usepackage{esint}

\allowdisplaybreaks

\setlength{\textwidth}{\paperwidth}
\addtolength{\textwidth}{-2.5in}
\calclayout

\theoremstyle{plain}
\newtheorem{theorem}{Theorem}[section]
\newtheorem{lemma}[theorem]{Lemma}

\newtheorem{proposition}[theorem]{Proposition}

\theoremstyle{definition}
\newtheorem{assumption}[theorem]{Assumption}
\theoremstyle{definition}

\theoremstyle{definition}

\theoremstyle{plain}
\newtheorem*{theorem*}{Theorem}
\theoremstyle{definition}
\newtheorem*{definition*}{Definition}

\theoremstyle{remark}
\newtheorem{remark}[theorem]{Remark}

\setcounter{MaxMatrixCols}{10}

\def\bC{\mathbb{C}}

\def\bN{\mathbb{N}}

\def\bR{\mathbb{R}}
\def\bS{\mathbb{S}}

\def\bZ{\mathbb{Z}}

\def\cD{\mathcal{D}}
\def\cF{\mathcal{F}}
\def\cH{\mathcal{H}}

\def\cL{\mathcal{L}}
\def\cM{\mathcal{M}}

\def\cP{\mathcal{P}}

\def\lc{\left(}
\def\rc{\right)}

\def\supp{\operatorname{supp}}

\def\*b{*_{\bullet}}

\def\Bd'{B_{\delta'}}

\def\cBd'{\bar{B}_{\delta'}}

\newcommand{\diam}{\mathrm{diam}}
\newcommand{\diff}{\mathrm{d}}

\newcommand{\Span}{\mathrm{span}}

\begin{document}
\begin{abstract}
Photoacoustic tomography is an emerging medical imaging technology whose primary aim is to map the high-contrast optical properties of biological tissues by leveraging high-resolution ultrasound measurements. Mathematically, this can be framed as an inverse source problem for the wave equation over a specific domain. In this work, for the first time, it is shown how, by assuming signal sparsity, it is possible to establish rigorous stable recovery guarantees when the data collection is given by spatial averages restricted to a limited portion of the boundary. Our framework encompasses many approaches that have been considered in the literature. The result is a consequence of a general framework for subsampled inverse problems developed in previous works and refined stability estimates for an inverse problem for the wave equation with surface measurements.
\end{abstract}

\subjclass{35R30}

\maketitle


\section{Introduction}
Photoacoustic tomography (PAT) is an emerging medical imaging technology, recognized as one of the most sophisticated among hybrid imaging modalities \cite{agranovsky2007, kuchment2007, wang2007, wang2009, tuchin2016}. PAT integrates two distinct forms of energy: electromagnetic (EM) waves, such as light, and acoustic waves, like ultrasound. The core objective of PAT is to map the optical properties of biological tissues, which is particularly valuable for applications such as tumor detection, vascular imaging, and even monitoring oxygen saturation levels in tissues.

The underlying mechanism of PAT is based on the thermoacoustic effect. When tissue is exposed to a brief pulse of EM radiation, it undergoes rapid heating and subsequent thermal expansion. This expansion creates a pressure wave that propagates through the tissue and can be detected by wide-band ultrasonic transducers placed outside the body. Cancerous tissues, absorbing a higher level of EM radiation, generate stronger pressure waves than healthy tissues. By reconstructing the initial distribution of pressure, PAT can produce highly informative data that reveals critical diagnostic details, making it a powerful tool in early disease detection and ongoing medical research.

\subsection{Mathematical model and stability issues}
In the literature, the following simple model for the pressure wave dynamics, known as the \textit{free space model}, is often considered. We assume that the pressure wave $u$ is generated by an initial pressure $u_0$, which is supported within a compact set $K\subset \bR^3$. The wave dynamics is described by the wave equation:
\begin{equation}
\label{eq:wav-IVP0}
    \begin{cases}
    u_{tt} - c^2\Delta u = 0, & \text{in } \mathbb{R}^3 \times [0, +\infty), \\
    u(\cdot, 0) = u_0, & \text{in } \mathbb{R}^3, \\
    u_t(\cdot, 0) = 0, & \text{in } \mathbb{R}^3,
\end{cases}
\end{equation}
where $c=c(x)$ represents the speed of propagation in the biological tissue. Here $u_{t},u_{tt}$ denote $\partial_t u,\partial^2_{t}u$, respectively. The pressure wave $u$ is measured on a (smooth) \textit{acquisition surface} $\Sigma\subset\bR^3$ over a finite time interval $[0, T]$, with $T < +\infty$. Additionally, we assume that $K$ is located at a positive distance from $\Sigma$. The reconstruction problem in PAT then reduces to an inverse problem for the wave equation: given $u|_{\Sigma \times [0, T]}$, reconstruct the initial pressure $u_0$ in a stable manner.

In this paper, we focus on the case where the speed of propagation $c$ is constant, and for simplicity, we assume $c \equiv 1$. For more detailed studies addressing the case of non-constant $c$, we refer the reader to \cite{agranovsky2, stefanov2009, stefanov2015, belhachmi2016, stefanov2016, tick2020, mathison2019}. Additionally, we highlight an alternative and possibly more accurate model, known as the \textit{bounded space model} \cite{cox2007, ammari2010, kunyansky2013, acosta2015, holman2015, stefanov2015, chervova2016, stefanov2016, alberti2018, alberti2020}, which incorporates the interaction of the transducers with the pressure wave through appropriate boundary conditions.

The ill-posedness of PAT is known to critically depend on the relationship between $K$, the compact set containing the support of the initial datum $u_0$, and the acquisition surface $\Sigma$. Roughly speaking, if $\Sigma$ is sufficiently large, the reconstruction problem becomes well-posed; otherwise, it remains ill-posed. This statement can be formalized using the following \textit{visibility condition} for constant-speed waves.

\begin{definition*}[\cite{nguyen2011}]
Let $\Sigma \subset \bR^3$ be a surface and let $K \subset\bR^3$ be a compact subset with $d(K,\Sigma)>0$. We say that the pair $(K,\Sigma)$ satisfies the \textit{visibility condition} if the following does not hold: there exists $ \xi \in \mathbb{R}^3 \setminus \{0\} $ and open sets $ \Omega_K, \Omega_{\Sigma} \subset \mathbb{R}^3 $ with $ \Omega_K \subset K $ and $ \Omega_{\Sigma} \supset \Sigma $ such that, for every $ x \in \Omega_K $, the line $ \{x + t\xi : t \in \mathbb{R}\} $ does not intersect $ \Omega_{\Sigma} $.
\end{definition*}
See Fig.~\ref{fig:violation} for an example of a situation where the visibility condition is not satisfied.
\begin{figure}[t!]
    \centering
    \includegraphics[width=0.9\textwidth]{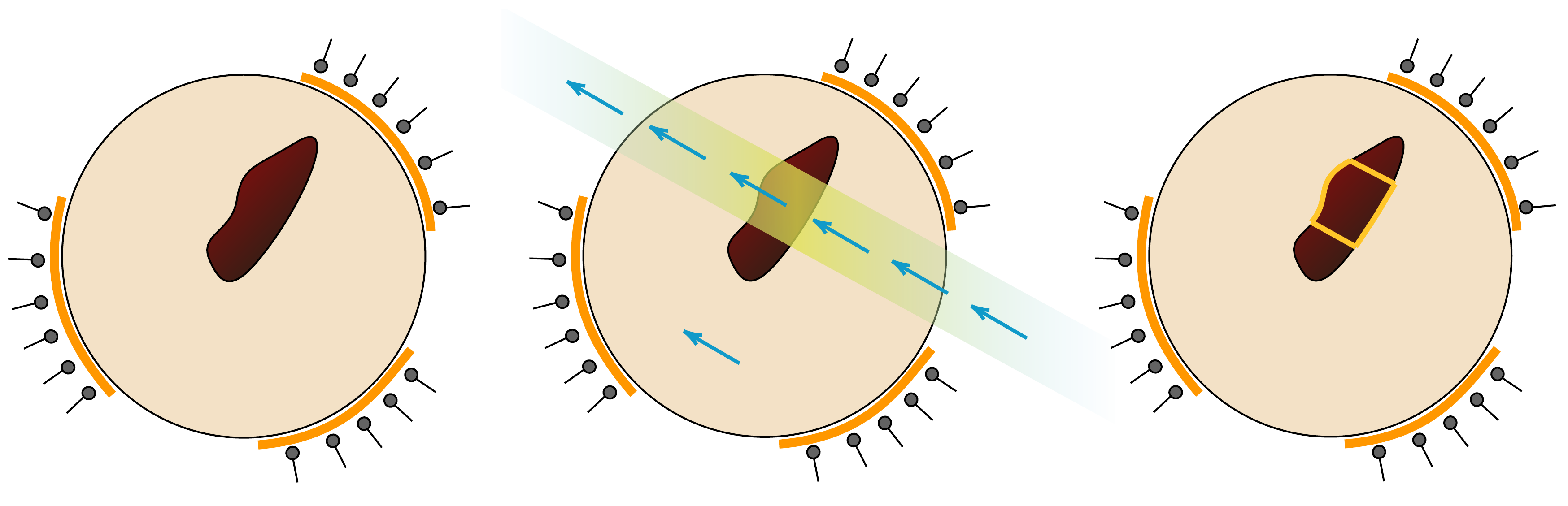}
    \put(-330,72){\textcolor{white}{$K$}}
    \put(-370,50){$\Sigma$}
    \put(-314,28){$\Sigma$}
    \put(-305,82){$\Sigma$}
    \put(-212,43){$\xi$}
    \put(-59,74){\textcolor{white}{\scalebox{0.7}{$\Omega_{K}$}}}
    \put(-336,-5){\scalebox{0.8}{(a)}}
    \put(-205,-5){\scalebox{0.8}{(b)}}
    \put(-65,-5){\scalebox{0.8}{(c)}}
    \caption{Example of failure of the visibility condition. (a) The set $K$ and the acquisition surface $\Sigma$. (b) A nonzero element $\xi$ and the associated line bundle that does not intersect a neighbourhood $\Omega_{\Sigma}$ of $\Sigma$. (c) An open set $\Omega_K$ contained in the intersection of $K$ and the line bundle.}
    \label{fig:violation}
\end{figure}

The following is a negative result, demonstrating that, if the visibility condition is not satisfied, then conditional Hölder stability cannot be achieved with respect to any Sobolev norm.

\begin{theorem*}[{\cite[Theorem 3.1]{nguyen2011}}]
Let $ U $ be the linear operator defined by
\[
    U\colon L^2(K) \rightarrow \mathcal{D}'(\Sigma \times [0, T]),\footnote{Here $\cD'$ is the space of distributions on $\Sigma\times[0,T]$, defined as the dual of $C^{\infty}_c(\Sigma\times[0,T])$.}
\]
mapping the initial datum $ u_0 $ to the restriction to $ \Sigma \times [0, T] $ of the corresponding solution of the wave equation with constant velocity $ c $.

Suppose that $(K,\Sigma)$ does not satisfy the visibility condition. Then there do not exist constants $ \mu, \delta, C > 0 $ and $ s_0, s_1 \geq 0 $ such that
\[
    \|u_0\|_{L^2(K)} \leq C \|U u_0\|_{H^{s_0}(\Sigma \times [0, T])}^\mu, \quad \forall u_0 \in H^{s_1}(K) \text{ with } \|u_0\|_{H^{s_1}(K)} \leq \delta.
\]
\end{theorem*}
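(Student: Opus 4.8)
The plan is to argue by contradiction, exploiting the fact that a high‑frequency wave packet travels along the rays of geometric optics: if the visibility condition fails, one can send a packet of data along the ``invisible'' direction, and the forward operator $U$ will damp it faster than any power of the frequency, while the $L^2$ norm of the data decays only polynomially. This beats any fixed Hölder rate.

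Concretely, suppose the visibility condition fails, so there are $\xi\in\bR^3\setminus\{0\}$ and open sets $\Omega_K\subset K$, $\Omega_\Sigma\supset\Sigma$ such that every line $\{x+t\xi:t\in\bR\}$ with $x\in\Omega_K$ avoids $\Omega_\Sigma$. Put $\omega=\xi/|\xi|$, fix a nonzero $v\in C_c^\infty(\Omega_K)$ (extended by $0$; note $\Omega_K\subset\mathrm{int}\,K$, so $v\in H^{s_1}(K)$), and consider the data $u_0^{(k)}=\lambda_k\,v\,e^{ik\,\omega\cdot x}$, $k\in\bN$, with $\lambda_k>0$ to be fixed (one can replace $u_0^{(k)}$ by its real part if real‑valued data are desired). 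Since $\|v e^{ik\omega\cdot x}\|_{L^2}=\|v\|_{L^2}$ is constant while $\|v e^{ik\omega\cdot x}\|_{H^{s_1}}\le C_v(1+k)^{s_1}$, the choice $\lambda_k=\delta/\big(C_v(1+k)^{s_1}\big)$ guarantees $\|u_0^{(k)}\|_{H^{s_1}(K)}\le\delta$ for all $k$, while $\|u_0^{(k)}\|_{L^2(K)}=c_0(1+k)^{-s_1}$ with $c_0=\delta\|v\|_{L^2}/C_v>0$.

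The key step is the bound $\|U u_0^{(k)}\|_{H^{s_0}(\Sigma\times[0,T])}=O(k^{-N})\,\|u_0^{(k)}\|_{L^2}$ for every $N$. For this I would build a geometric optics parametrix for $u_{tt}-\Delta u=0$. The eikonal equation with initial phase $\omega\cdot x$ (which has unit gradient) has the two solutions $\psi_\pm(x,t)=\omega\cdot x\pm t$, and the WKB ansatz $\tilde u^{(k)}=\sum_\pm e^{ik\psi_\pm}\sum_{j\ge0}k^{-j}a_j^\pm$ produces amplitudes that satisfy transport equations along the straight characteristics $x\mapsto x\mp t\omega$; in particular $a_j^\pm(\cdot,t)$ is supported in $\supp v\mp t\omega$, so $\tilde u^{(k)}(\cdot,t)$ is supported in $(\supp v-t\omega)\cup(\supp v+t\omega)$, a subset of the union of lines through $\Omega_K$ in direction $\omega$. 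By the failure of visibility this set is disjoint from $\Omega_\Sigma\supset\Sigma$, uniformly for $t\in[0,T]$, hence $\tilde u^{(k)}$ vanishes on a neighbourhood of $\Sigma\times[0,T]$. Truncating (or Borel‑summing) the series yields $\tilde u^{(k)}$ with $\tilde u^{(k)}(0)=u_0^{(k)}$, $\partial_t\tilde u^{(k)}(0)=O(k^{-\infty})$ and $(\partial_{tt}-\Delta)\tilde u^{(k)}=O(k^{-\infty})$ in every $H^m(\bR^3)$, uniformly on $[0,T]$ and all relative to $\|u_0^{(k)}\|_{L^2}$. Standard energy estimates then give $\|u^{(k)}-\tilde u^{(k)}\|_{H^m(\bR^3)}=O(k^{-\infty})\|u_0^{(k)}\|_{L^2}$, uniformly in $t$ and for all time derivatives; since $u^{(k)}|_{\Sigma\times[0,T]}=U u_0^{(k)}$ and $\tilde u^{(k)}|_{\Sigma\times[0,T]}=0$, the trace theorem delivers the claimed estimate. (Equivalently, one may invoke propagation of singularities / the FIO calculus to say that $U$ is smoothing on wavefront directions that never reach $\Sigma$, and then quantify this on the packets $u_0^{(k)}$.)

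Finally, if the asserted inequality held with constants $C,\mu,\delta$ and exponents $s_0,s_1$, then for all large $k$
\[
c_0(1+k)^{-s_1}\le \|u_0^{(k)}\|_{L^2(K)}\le C\,\|U u_0^{(k)}\|_{H^{s_0}(\Sigma\times[0,T])}^{\mu}\le C\,C_N^{\mu}\,(1+k)^{-\mu(s_1+N)},
\]
i.e. $c_0\le C C_N^{\mu}(1+k)^{s_1(1-\mu)-\mu N}$. Choosing $N$ with $\mu N>s_1$ (possible since $\mu>0$ and $N$ is arbitrary) makes the exponent negative, so letting $k\to\infty$ forces $c_0\le0$, contradicting $c_0>0$. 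The case $U u_0^{(k)}\equiv 0$ is covered by the same chain of inequalities. The only genuine obstacle is making the parametrix step rigorous: one must check that the \emph{full} WKB expansion has amplitudes supported exactly on the transported supports, so that $\tilde u^{(k)}$ really vanishes near $\Sigma\times[0,T]$ rather than merely to leading order, and that the remainder together with its time derivatives is $O(k^{-\infty})$ in the relevant norms; the subsequent bookkeeping with Sobolev and trace norms is routine.
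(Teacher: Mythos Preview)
The paper does not contain a proof of this statement: it is quoted verbatim from \cite{nguyen2011} as background motivation in the introduction, and no argument for it appears anywhere in the manuscript. There is therefore nothing in the paper to compare your proposal against.

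On its own merits, your sketch is the standard and correct strategy for results of this type: build highly oscillatory Cauchy data concentrated microlocally in the invisible direction, use a WKB parametrix (or, equivalently, propagation of singularities for the half-wave FIOs) to show that the trace on $\Sigma\times[0,T]$ is $O(k^{-\infty})$, and then observe that no polynomial decay of $\|u_0^{(k)}\|_{L^2}$ can be dominated by a fixed power of an $O(k^{-\infty})$ quantity. The support tracking for the transport hierarchy, the matching of both Cauchy conditions via the two branches $\psi_\pm$, and the final inequality chain are all in order; the small sign slip in ``$\supp v\mp t\omega$'' versus ``$\supp v\pm t\omega$'' is immaterial since both translates lie on the invisible line bundle. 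The honest caveat you flag --- that one must verify the full amplitude expansion preserves the transported supports and that the remainder is $O(k^{-\infty})$ in the relevant norms --- is exactly the technical work that a full proof would have to carry out, and it goes through without surprises for the constant-coefficient wave equation.
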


We also highlight the positive result in \cite[Theorem 3]{stefanov2009}, which states that a visibility condition similar to our definition is sufficient for Lipschitz-stable reconstruction with respect to appropriate Sobolev norms.

The visibility condition is closely related to the orientation of the singularities of $ u_0 $, due to the propagation of singularities for the wave equation. Roughly speaking, the acquisition surface $ \Sigma $ must cover all directions orthogonal to the singularities to enable stable reconstruction of $ u_0 $. Unfortunately, this condition might not be satisfied in practical scenarios with typical transducer configurations. The negative result mentioned above suggests that Sobolev norms may not adequately describe stable recovery results in the context of PAT with incomplete or partial data. This raises a natural question: could a different prior on the initial datum $ u_0 $ lead to a stable recovery result better suited for such scenarios?

\subsection{Main contribution}
The aim of this work is to obtain a stable recovery result (see Theorem~\ref{thm:main_result} for the precise statement) for the reconstruction problem in partial data PAT using a wavelet sparse prior. We provide a brief overview of our result as follows.

We assume that the datum $ u_0 \in L^2(B_1) $ is $ s $-sparse with respect to a suitable wavelet orthonormal basis $ (\phi_{j,n})_{(j,n) \in \Gamma} $, $\Gamma$ being a suitable index set, -- see Sec.~\ref{sec:wavelets} --, namely there exists $ S \subset \Gamma $ with $ |S| \leq s $ such that
\begin{equation}
\label{eq:u0definition}
    u_0 = \sum_{(j,n) \in S} x_{j,n} \phi_{j,n}
\end{equation}
for some coefficients $ x_{j,n} \in \mathbb{C} $. Furthermore, we assume that the maximum scale $ j $ appearing in the above expansion is $ j_0 \in \mathbb{N} $.

Let $ u $ be the pressure wave associated with the initial datum $ u_0 $. Let $\Sigma$ be a smooth surface embedded in $\bR^3$. We assume that the following stability estimates for the complete data on $\Sigma$ hold for some sufficiently large $T>0$:
\begin{equation}
\label{eq:stab_rakesh}
\begin{split}
    c\|u_0\|_{L^2(\bR^3)} \leq &\|u\|_{L^2(0,T;L^2(\Sigma))} \leq
    C\|u_0\|_{L^2(\bR^3)},\\
    &\|u\|_{L^2(0,T;H^1(\Sigma))} \leq
    C\|u_0\|_{H^1(\bR^3)},
\end{split}
\end{equation}
for every smooth $u_0$ supported in $K$, where $K$ is a fixed compact set containing the supports of all wavelets $(\phi_{j,n})_{(j,n)\in\Gamma}$ such that $d(K,\Sigma)>0$. We will prove in Sec.~\ref{sec:spherical} that \eqref{eq:stab_rakesh} is satisfied in the case where the surface $\Sigma$ is a sphere $\partial B_R$ for some $R>0$.

Instead of measuring $ u $ on the entire surface $ \Sigma $, we proceed as follows: we partition $ \Sigma $ into a finite number of detectors $ (E_i)_{i=1}^N $; the measurements are taken as a noisy version of local averages of $ u $ on only $ m $ randomly chosen detectors $ E_{i_1}, \dots, E_{i_m} $, where $ m \leq N $. More precisely, we assume that $ \varepsilon_k = \varepsilon_k(t) $ represents the noise on each detector $ E_{i_k} $, and that the measurements are given by the time-dependent functions
\begin{align*}
    y_k(t) &\coloneqq \frac{1}{|E_{i_k}|} \int_{E_{i_k}} u(y,t)\, \mathrm{d}\sigma(y) + \varepsilon_k(t),\qquad t\in[0,T],
\end{align*}
where $ |\cdot| $ and $\diff{\sigma}$ denote the surface measure on $\Sigma$. We assume that the noise is controlled a priori by some noise level $ \beta \geq 0 $, i.e., that $ \|\varepsilon_k\| \leq \beta $ with respect to some appropriate norm $ \|\cdot\| $. The detectors used in the reconstruction procedure are chosen according to a random design -- see Section~\ref{sec:detectors}.

We now state a simplified version of the main result, given by Theorem~\ref{thm:main_result}.

\begin{theorem*}
Suppose the detectors $ (E_i)_{i=1}^N $ satisfy the uniform eccentricity condition -- see \eqref{eq:ass_detectors} --, and that
\begin{equation}
\label{eq:detector-condition0}
    \text{diam}_{\mathbb{R}^3}(E_i) \lesssim 2^{-j_0}.
\end{equation}
Suppose also that
\begin{align*}
    m \gtrsim j_0 s \cdot (\emph{log factors}).
\end{align*}
Let $ \widehat{u} $ be a solution of the following $ \ell^1 $-minimization problem:
\begin{equation}
    \min_{v_0} \sum_{j,n} |\langle v_0,\phi_{j,n} \rangle|\colon\qquad
    \frac{1}{m}\sum_{k=1}^m \bigg\|\frac{1}{|E_{i_k}|}\int_{E_{i_k}} v\,\diff{\sigma}-y_k\bigg\|_{L^2(0,T)}^2 \leq C\beta^2,
\end{equation}
where $C>0$ is a suitable constant and the minimum is taken over $$\Span\{\phi_{j,n}\colon\ (j,n)\in\Gamma,\, j\leq j_0 \}$$ and $v$ is the solution of \eqref{eq:wav-IVP0} with initial datum $v_0$ and $c\equiv 1$. Then, with overwhelming probability, the following recovery estimate holds:
\begin{equation}
    \|u_0 - \widehat{u}\|_{L^2} \lesssim \beta.
\end{equation}
\end{theorem*}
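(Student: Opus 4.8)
The proof strategy is dictated by the phrase ``a consequence of a general framework for subsampled inverse problems developed in previous works''. The plan is to recast the measurement model as a finite-dimensional compressed sensing problem and then invoke an abstract recovery theorem (of the type in the author's prior work on subsampled inverse problems) whose hypothesis is a suitable restricted isometry / quasi-isometry property for the randomly subsampled forward map. Concretely, I would first fix the finite-dimensional signal space $V_{j_0}=\Span\{\phi_{j,n}\colon (j,n)\in\Gamma,\ j\le j_0\}$ of dimension $\asymp 2^{3j_0}$, identify $u_0$ with its coefficient vector $x\in\bC^{|\Gamma_{j_0}|}$ supported on $S$, and define the ``full'' forward operator $A$ sending coefficients to the vector of exact detector averages $\big(\tfrac{1}{|E_i|}\int_{E_i}u\,\diff\sigma\big)_{i=1}^N$ in $L^2(0,T)^N$, together with the subsampled operator $A_\Omega$ using only the $m$ random indices $i_1,\dots,i_m$, rescaled by $1/\sqrt m$. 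The abstract theorem then yields $\|u_0-\widehat u\|_{L^2}\lesssim\beta$ once one verifies (i) a lower bound $\|Ax\|\gtrsim\|x\|$ on all of $V_{j_0}$ (a ``$g$-RIP''-type global isometry), and (ii) an incoherence/concentration estimate ensuring that drawing $m\gtrsim j_0 s\cdot(\text{log factors})$ detectors suffices for the subsampled map to be a quasi-isometry on $s$-sparse vectors.

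For step (i), the global lower bound comes from combining the complete-data stability estimate \eqref{eq:stab_rakesh} with a comparison between $\|u\|_{L^2(0,T;L^2(\Sigma))}$ and the discretized quantity $\big(\sum_i|E_i|\,\|\tfrac{1}{|E_i|}\int_{E_i}u\,\diff\sigma\|_{L^2(0,T)}^2\big)^{1/2}$. The key point is that averaging over a detector $E_i$ only loses information at scales below $\diam(E_i)$; since $\diam_{\bR^3}(E_i)\lesssim 2^{-j_0}$ and $u_0\in V_{j_0}$ has no finer features, the second (gradient) estimate in \eqref{eq:stab_rakesh} controls the approximation error $\|u-\Pi u\|$ between $u$ on $\Sigma$ and its piecewise-average $\Pi u$ by $\lesssim 2^{-j_0}\|u\|_{L^2(0,T;H^1(\Sigma))}\lesssim 2^{-j_0}\|u_0\|_{H^1}\lesssim\|u_0\|_{L^2}$ (a Bernstein inequality converts the $H^1$ norm on $V_{j_0}$ into $2^{j_0}$ times the $L^2$ norm). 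Choosing the implicit constant in \eqref{eq:detector-condition0} small makes this error strictly smaller than the lower bound $c\|u_0\|_{L^2}$, so $\|Ax\|\gtrsim\|x\|$; the uniform eccentricity condition \eqref{eq:ass_detectors} guarantees $|E_i|\asymp \diam(E_i)^2$ uniformly, which is what makes the weighted discretization behave like the surface integral. The matching upper bound $\|Ax\|\lesssim\|x\|$ is immediate from Cauchy--Schwarz and the upper estimate in \eqref{eq:stab_rakesh}.

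For step (ii), I would apply the abstract subsampling result with the ``local coherence'' of the rows of $A$ playing the role usually played by coherence of a sampling basis. Here each ``row'' is the functional $x\mapsto \tfrac{1}{|E_i|}\int_{E_i}u\,\diff\sigma\in L^2(0,T)$, and one needs to bound, uniformly over scales $j\le j_0$ and translations $n$, the quantity $|E_i|^{-1}\big(\sup_{x}\|\tfrac{1}{|E_i|}\int_{E_i}u_{\phi_{j,n}}\,\diff\sigma\|/\|\phi_{j,n}\|\big)$ or rather the block-coherence of $A^*A$; the localization of wavelets and the separation $d(K,\Sigma)>0$ (so that $u|_\Sigma$ is smooth) give the needed decay, yielding the $j_0 s$ scaling (the extra $j_0$ over the usual $s$ is the price of the multiscale structure, exactly as in the author's earlier subsampling-with-wavelets results). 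Feeding the coherence bound and the isometry constant from step (i) into the abstract theorem produces, with overwhelming probability, the restricted quasi-isometry of $A_\Omega$ on $2s$-sparse vectors in $V_{j_0}$, hence via standard $\ell^1$-recovery theory the estimate $\|x-\widehat x\|_2\lesssim\beta$, and then $\|u_0-\widehat u\|_{L^2}=\|x-\widehat x\|_2\lesssim\beta$ since $(\phi_{j,n})$ is orthonormal.

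The main obstacle I expect is step (i), and specifically making the discretization-of-the-surface-integral argument rigorous: one must control $\|u-\Pi u\|_{L^2(0,T;L^2(\Sigma))}$ by the intrinsic geometry of the detectors on the curved surface $\Sigma$ while the hypothesis $\diam_{\bR^3}(E_i)\lesssim 2^{-j_0}$ is stated for the ambient diameter, and one must relate the $H^1(\Sigma)$ norm of the trace of $u$ back to $\|u_0\|_{H^1(\bR^3)}$ through \eqref{eq:stab_rakesh} and then to $2^{j_0}\|u_0\|_{L^2}$ via a Bernstein estimate valid on the wavelet space $V_{j_0}$ (which requires the wavelets to be smooth enough, to be arranged in Section~\ref{sec:wavelets}). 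A secondary technical point is checking that the abstract framework's hypotheses are literally met in this Hilbert-space-valued (the measurements live in $L^2(0,T)$, not $\bC$) setting, but this should be routine once the operator $A$ and its subsampled version are set up with the correct $1/\sqrt m$ normalization.
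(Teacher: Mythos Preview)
Your overall architecture matches the paper's: place the problem in the abstract subsampling framework, verify a quasi-isometry/balancing property for the full discretized forward map on $\cM_{\leq j_0}$, verify a coherence bound for the individual detector functionals $\phi_{j,n}\mapsto F_i\phi_{j,n}$, and invoke the abstract recovery theorem. For step~(i) your route differs slightly: you propose a Poincar\'e-on-each-detector argument at the $H^1$ level, then the $H^1$ stability in \eqref{eq:stab_rakesh} and a Bernstein inequality $\|u_0\|_{H^1}\lesssim 2^{j_0}\|u_0\|_{L^2}$ on $V_{j_0}$. The paper instead interpolates the two estimates in \eqref{eq:stab_rakesh} to get an $H^{1/2}$ bound and uses the Gagliardo seminorm, obtaining $\|u-\Pi u\|_{L^2}^2\lesssim(\max_i\diam E_i)\,\|u\|_{H^{1/2}(\Sigma)}^2$ together with $\|u_0\|_{H^{1/2}}^2\lesssim 2^{j_0}\|u_0\|_{L^2}^2$. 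Both balance to $O(\mu)\|u_0\|_{L^2}$. The paper's $H^{1/2}$ route needs only the eccentricity bound \eqref{eq:ass_detectors} (via $\diam(E_i)^3/|E_i|\le C_{\mathrm{ecc}}^2\diam(E_i)$), whereas your $H^1$ Poincar\'e requires a uniform-in-$i$ Poincar\'e constant for the $E_i$, which eccentricity alone does not guarantee. So your variant is simpler but less general; under a mild additional shape assumption it would go through.

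The genuine gap is step~(ii). Saying that ``the localization of wavelets and the separation $d(K,\Sigma)>0$ (so that $u|_\Sigma$ is smooth) give the needed decay'' does not deliver the coherence bound, which in the paper's normalization reads $\|F_i\phi_{j,n}\|_{L^2(0,T)}\le B_0|E_i|^{1/2}$ with $B_0$ \emph{independent of $j$}. Smoothness of the trace gives no uniform-in-$j$ pointwise control of $U\phi_{j,n}$ on $\Sigma$; the naive $L^2$ bound yields only $\|F_i\phi_{j,n}\|\lesssim 1$, not $|E_i|^{1/2}$. The paper's argument is specific to the three-dimensional wave equation and uses two ingredients you do not mention: Kirchhoff's formula, giving a pointwise representation of $u_{j,n}(x,t)$ as an integral over $\partial B_t(x)$, and Huygens' principle, confining the time support of $u_{j,n}(x,\cdot)$ to an interval of length $\asymp 2^{-j}$. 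The key geometric estimate is $|\partial B_t(x)\cap Q_{j,n}|\lesssim 2^{-2j}$ for the intersection of the sphere with a ball $Q_{j,n}$ of radius $\asymp 2^{-j}$ containing $\supp\phi_{j,n}$; combined with $\|\phi_{j,n}\|_\infty\lesssim 2^{3j/2}$ and $\|\nabla\phi_{j,n}\|_\infty\lesssim 2^{5j/2}$ this yields $|u_{j,n}(x,t)|\lesssim 2^{j/2}\mathbbm{1}_{[d(x)-C2^{-j},\,d(x)+C2^{-j}]}(t)$, and the time integral over the thin interval produces the factor $2^{-j}$ that cancels the $2^{j}$. Without this mechanism $B$ blows up with $j_0$ and the claimed sample complexity is lost. (Incidentally, the $j_0$ in $m\gtrsim j_0 s$ is not a multiscale coherence penalty as you suggest; it is simply $\log|\Lambda_{\leq j_0}|\asymp j_0$ in the abstract theorem, and relies on $B$ being $O(1)$.)
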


In other words, the result states that we can achieve Lipschitz stable recovery of $ s $-sparse signals, provided the detectors are small enough relative to the signal resolution (specifically, relative to the highest wavelet scale index $ j_0 $ appearing in the expansion \eqref{eq:u0definition}), and the number of sampled detectors is proportional to $ j_0 s $, up to logarithmic factors. In Theorem~\ref{thm:main_result}, we also account for a possible sparsity defect and infinite resolution of $ u_0 $ by introducing appropriate sparsity and truncation errors. We also derive an extension of this result -- see Theorem~\ref{thm:new-result} -- that can deal with more general sensing patterns.

\begin{remark}
If the sparsity level is sufficiently small, the number of detectors $m$ used in the reconstruction is strictly smaller than the number of detectors $N$ in the partition. Indeed, condition \eqref{eq:detector-condition0} implies that
\begin{align*}
    N \gtrsim 2^{2j_0}.
\end{align*}
On the other hand, up to log factors, the number of detectors used in the reconstruction is given by
\begin{align*}
    m \gtrsim j_0 s.
\end{align*}
Therefore, our result shows that it is possible to achieve stable recovery of sparse signals in the partial data regime of PAT.
\end{remark}

\subsection{Comparison with the literature}
Obtaining stable recovery guarantees for $\ell^1$-minimization relies on tools from compressed sensing (CS), a class of signal processing techniques that exploits signal sparsity to achieve stable reconstruction with few samples \cite{CRT,donoho,foucartrauhut}. Since the advent of CS, numerous efforts have been made to develop CS-based strategies to accelerate the acquisition process in PAT \cite{provost2009,meng2012,haltmeier2016,sandbichler2015,burgholzer2016,arridge2016,betcke2017,alberti2021,haltmeier2017,antholzer2019,mathison2019,campodonico2021,haltmeier2024}. Most cited works consider the following general measurement model:
\begin{equation}
\label{eq:type-of-meas}
    y_k(t) = \langle u(\cdot,t),\psi_k \rangle_{L^2(\Sigma)},
\end{equation}
where $\psi_k \in L^2(\Sigma)$ are suitable masks. In the literature, $\psi_k$ are typically binary masks, i.e., functions on $\Sigma$ taking values in $\{0,1\}$ or $\{\pm 1\}$; this is similar in spirit to the type of measurements considered in this work. In some cases, the measurements are given by pointwise samples $u(x_k,\cdot)$ at specific locations $x_1,\dots,x_m\in\Sigma$; such measurements can be approximated by \eqref{eq:type-of-meas} choosing $\psi_k$ as rescaled indicator functions with small support, effectively providing local averages of $u(x,\cdot)$ around $x_k$.

The practical realization of these measurements can be achieved using different scanning systems, such as the Fabry-P\'erot photoacoustic scanner \cite{Zhang2008} or mechanically scanned piezoelectric detectors. There are essentially two classes of scanning patterns. In \textit{single-point scanning}, the masks $\psi_k$ are localized in small regions concentrated around specific points on a grid -- see, for instance, \cite{provost2009,meng2012}. To reduce coherence in the measurements, the mask locations are often chosen according to a random design. This more conventional approach is the focus of our method in Sec.~\ref{sec:main-result}. On the other hand, in \textit{pattern-interrogation scanning}, a series of orthogonal masks $\psi_k$ supported over the entire surface $\Sigma$ is used, rather than focusing on specific locations. This approach speeds up acquisition in many scenarios and is adopted, for example, in \cite{haltmeier2016,sandbichler2015,arridge2016,betcke2017,haltmeier2024}. We address this method in Sec.~\ref{sec:new-result}.

Reconstruction procedures using CS in PAT typically fall into two categories. The first, known as the \textit{one-step approach}, aims to reconstruct the unknown signal $u_0$ directly from the measurements $y_k$ -- see, for instance, \cite{provost2009,meng2012,arridge2016,betcke2017}. The second, the \textit{two-step approach}, first applies compressed sensing to reconstruct the full data $u|_{\Sigma \times [0,T]}$, which is then used for signal reconstruction via standard techniques such as time-reversal or back-projection. In the latter case, applying CS techniques requires enforcing sparsity in the measurements $y_k$; this is typically achieved by processing the data with suitable sparsifying transformations---see, for example, \cite{haltmeier2016,sandbichler2015}. As argued in \cite{haltmeier2016}, the two-step approach has numerical advantages, as it avoids solving the wave equation in the first step, thereby reducing computational cost. However, it relies on the assumption that boundary data can be efficiently sparsified via an appropriate transformation.

Due to the complexity of the forward map in PAT, the interaction between the signal’s sparsity and the sparsity of the measurements has so far defied a rigorous theoretical understanding. As a result, most existing research remains empirical, providing only partial answers regarding theoretical guarantees. In this work, we present, to the best of the author's knowledge, the first rigorous theoretical guarantees for PAT in the partial data regime, leveraging the sparsity of the signal itself with respect to a suitable wavelet basis. We also mention the work of \cite{campodonico2021}, which shares a similar spirit with our study. There, the authors show that in many experimental setups, CS in PAT reduces to the well-known problem of reconstructing a sparse signal from a few samples of its Fourier coefficients. While this is of theoretical interest, \cite{campodonico2021} does not provide recovery guarantees such as those in Theorem~\ref{thm:main_result}, and the performance of the proposed approach in PAT is only assessed via numerical simulations.

\subsection{Main elements in the analysis}
Theorem~\ref{thm:main_result} critically depends on the general framework for compressed sensing in inverse problems developed in \cite{split1,split2}. In these works, the authors developed sufficiently flexible tools to study subsampled linear inverse problems with general vector-valued measurements and to derive precise recovery sample complexity estimates that relate three key quantities: the sparsity or compressibility of the signal, the number of measurements required for stable recovery, and the ill-posedness of the problem. In \cite{split2}, the result is based on three elements:
\begin{itemize}
    \item Stability bounds for the natural forward map $U$ associated with the inverse problem, referred to as \textit{quasi-diagonalization} bounds.
    \item A \textit{balancing property} that controls the stability in the discretization of the natural forward map $U$, leading to a truncated forward map $F$.
    \item \textit{Coherence bounds} for the measurement operators $F_t$ with respect to the analysis dictionary $(\phi_{j,n})_{(j,n)\in\Gamma}$ used in the reconstruction.
\end{itemize}

The stability properties required for the main result -- see Assumption~\ref{ass:stability} -- are inspired by results from \cite{rakesh1,rakesh2}. In particular, we are able to prove that such stability estimates hold when $\Sigma$ is a sphere as a consequence of a trace identity in \cite{rakesh2} -- see Proposition~\ref{lem:L2_stability}. The stability estimate in Lemma~\ref{lem:H12_stability} is then obtained from  Assumption~\ref{ass:stability} by complex interpolation; a proof of these lemmas is provided in the Appendix. The proof of the trace identity in \cite{rakesh2} relies on a result from \cite{rakesh1}, which in turn makes essential use of the properties of the spherical mean Radon transform. Notice that, for a general smooth bounded domain $\Omega\subset\bR^3$, the following estimate is easy to deduce:
\begin{align*}
    \|u\|_{L^2(0,T;H^{1/2}(\partial\Omega))} \leq C\|u_0\|_{H^1(\bR^3)},
\end{align*}
where $u_0$ is supported in $K \subset \partial\Omega$. The trace identity from \cite{rakesh2} shows that, in the case where $\Omega$ is a ball, such a loss of regularity of $1/2$ in the Sobolev exponent does not occur. It is reasonable to expect that the same stability should hold also for more general smooth embedded surfaces.

Regarding the balancing property, in the case of PAT, it reduces to a bound of the following type -- see Proposition~\ref{prop:fond_prop1}:
\begin{equation}
\label{eq:projection_piecewise}
    \|(I - P_{\cF})u\|_{L^2(0,T;L^2(\Sigma))} \leq \theta \|u_0\|_{L^2(\bR^3)}\qquad u_0 \in \Span_{ \substack{(j,n) \\ j\leq j_0} } \{\phi_{j,n}\},
\end{equation}
where $P_{\cF}$ is the orthogonal projection of a function in $L^2(0,T;L^2(\Sigma))$ onto the space of (time-dependent) piecewise constant functions with respect to the partition $(E_i)_{i=1}^N$ and $\theta\in(0,1)$ is a sufficiently small constant. To obtain this estimate, we used ideas from \cite{hyvonen} concerning approximation of boundary measurements with electrodes in the context of the Calderón problem. Using a refined argument developed in \cite{rondi}, we obtain the bound:
\begin{align*}
    \|(I - P_{\cF})u\|_{L^2(0,T;L^2(\Sigma))} \lesssim \big(\max_i \diam_{\bR^3}(E_i)\big)^{1/2} \|u\|_{L^2(0,T;H^{1/2}(\Sigma))}.
\end{align*}
From here, additional ad hoc estimates are required, including leveraging the so-called Littlewood-Paley property of wavelets -- we refer the reader to \cite{Me} for further details.

Finally, the coherence bounds from Proposition~\ref{prop:fond_prop2} exploit, in a crucial way, the construction of tensorized wavelets and Huygens' principle for the wave equation in odd dimensions. Using these elements, we can apply the result from \cite[Theorem 2.4]{split2} -- or, up to minor modifications, \cite[Theorem 3.11]{split1} -- to obtain Theorem~\ref{thm:main_result}.

\subsection{Structure of the paper}
The paper is organized as follows. In Sec.~\ref{sec:wavelets} we introduce the dictionary of wavelets used in the reconstruction and the related notion of sparsity. In Sec.~\ref{sec:freespace}, we discuss the model for the dynamics of the pressure wave and discuss some assumptions on the stability of the associated inverse wave problem. In Sec.~\ref{sec:detectors}, we introduce the discretization of measurements via the notion of detectors, and define the associated measurements. In Sec.~\ref{sec:main-result}, we state our main result (Theorem~\ref{thm:main_result}). In Sec.~\ref{sec:new-result}, we extend our main result to more general sensing systems considered in the literature (Theorem~\ref{thm:new-result}). In Sec.~\ref{sec:proofs}, we provide the proof of the results from Sec.~\ref{sec:setting}; the proof of the more technical Lemma~\ref{lem:H12_stability} is deferred to Appendix~\ref{appendix:stab_est}.

\section{Setting and main result}
\label{sec:setting}

\subsection{Wavelet dictionary and sparsity}
\label{sec:wavelets}
Our goal is to reconstruct a signal in $L^2(B_1)$, where $B_1$ is the unit ball in $\bR^3$, that is compressible with respect to a suitable wavelet dictionary. We briefly recall the construction of compactly supported separable wavelets in $\bR^3$ and define the space to which the input signal belongs. We refer the reader to \cite{Me,Ma} and \cite[Appendix A]{split2} for further details.

There exists an orthonormal basis of compactly supported elements $(\psi_n^3)_{n}\cup(\phi_{j,n,\varepsilon}^3)_{n,\varepsilon}$, where $n=(n_1,n_2,n_3)\in\bZ^3$, $j\in\bN$ and $\varepsilon=(\varepsilon_1,\varepsilon_2,\varepsilon_3)\in\{0,1\}^3\setminus\{(0,0,0)\}$, defined as follows:
\begin{equation}
\label{eq:wav_def}
\begin{split}
    \psi_n^3(x_1,x_2,x_3) &\coloneqq \psi(x_1-n_1)\psi(x_2-n_2)\psi(x_3-n_3),\\
    \phi_{j,n,\varepsilon}^3(x_1,x_2) &\coloneqq 2^{3/2 j} \phi^{\varepsilon_1}(2^j x_1 - n_1)\phi^{\varepsilon_2}(2^j x_2-n_2)\phi^{\varepsilon_3}(2^j x_3-n_3),
\end{split}
\end{equation}
where $\phi^0=\psi$ and $\phi^1=\phi$ are compactly supported functions in $L^2(\bR)$; moreover, $\phi$ has zero mean. The functions $\psi$ and $\phi$ are referred to as the scaling function and the mother wavelet of the dictionary, respectively.

In what follows, we will suppose that $\psi,\phi\in C^2$; such a choice is possible -- see, for instance, \cite[Proposition A.5]{split2}. In this case, the wavelet dictionary satisfies the following \textit{Littlewood-Paley property} \cite[Proposition A.7]{split2}: for each $|s|<2$, there exists constants $c,C>0$ such that
\begin{equation}
\label{eq:litt_paley}
    c \|u\|_{H^s(\bR^3)}^2 \leq \sum_{n} |\langle u,\psi_n^3 \rangle|^2 + \sum_{j,n,\varepsilon} 2^{2sj} |\langle u,\phi_{j,n,\varepsilon}^3 \rangle|^2 \leq C \|u\|_{H^s(\bR^3)}^2,\quad
    u\in H^s(\bR^3).
\end{equation}
We consider a dictionary defined as follows: up to relabeling, we denote $(\psi_n^3)_{n}\cup(\phi_{j,n,\varepsilon}^3)_{j,n,\varepsilon}$ as  $(\phi_{j,n})_{j,n}$ ($j=0$ corresponding to the low frequency elements $(\psi_n^3)_{n}$ and $j>0$ corresponding to the high frequencies elements $(\phi_{j,n,\varepsilon}^3)_{j,n,\varepsilon}$) and consider the index set $\Gamma$ defined as
\begin{align*}
    \Gamma \coloneqq \{(j,n)\colon\ \supp(\phi_{j,n})\cap B_1\neq \emptyset\}.
\end{align*}
We consider the following compact set:
\begin{equation}
\label{eq:Kdef}
    K \coloneqq
    \overline{\bigcup_{(j,n)\in\Gamma} \supp(\phi_{j,n}) }.
\end{equation}
We define the space
\begin{equation}
\label{eq:definitionH1}
    \cH_1 \coloneqq \overline{\Span\{\phi_{j,n}\colon\ (j,n)\in\Gamma\}}.
\end{equation}
We also define the following index sets
\begin{align*}
    \Lambda_j \coloneqq \{(j',n)\in\Gamma\colon\ j'=j\},\qquad
    \Lambda_{\leq j_0} \coloneqq \cup_{j\leq j_0} \Lambda_{j},
\end{align*}
the subspaces of $\cH_1$
\begin{align*}
    \cM_j \coloneqq \Span\{\phi_{j',n}\colon\ (j',n)\in\Lambda_j\},\quad
    \cM_{\leq j_0} \coloneqq \Span\{\phi_{j,n}\colon\ (j,n)\in\Lambda_{\leq j_0}\}
\end{align*}
and the corresponding orthogonal projections in $\cH_1$
\begin{align*}
    P_{j} \coloneqq P_{\cM_j},\qquad 
    P_{\leq j_0} \coloneqq P_{\cM_{\leq j_0}},\qquad P_{\leq j_0}^{\perp} \coloneqq I-P_{\leq j_0}.
\end{align*}
We also remark that $|\Lambda_j|\asymp 2^{3j}$ for each $j$, so that
\begin{align*}
    \log(|\Lambda_{\leq j_0}|) \asymp j_0.
\end{align*}
Finally, we define the analysis operator $\Phi\colon\cH_1\rightarrow\ell^2(\Gamma)$ as $\Phi{u}\coloneqq(\langle u,\phi_{j,n} \rangle)_{j,n}$ and the error of best $s$-sparse approximation with respect to the $\ell^1$-norm as follows:
\begin{align*}
    \sigma_s(u)_1 \coloneqq \inf\{ \|\Phi{u}-x\|_{1}\colon\ x\in\ell^2(\Gamma)\text{ is $s$-sparse} \},
\end{align*}
where by $s$-sparse we mean a sequence in $\ell^2(\Gamma)$ that has at most $s$ nonzero components.

\subsection{PDE model for PAT}
\label{sec:freespace}
Given a signal $u_0\in L^2(K)$, where $K$ is as in \eqref{eq:Kdef}, we consider its evolution $u=u(x,t)$ being the weak solution to the following initial value problem for the wave equation -- see \cite{agranovsky2007} and the references therein:
\begin{equation}
\label{eq:waveIVP}
\begin{cases}
    u_{tt} - \Delta{u} = 0 & \text{in $\bR^3\times[0,+\infty)$},\\
    u(\cdot,0) = u_0 & \text{in $\bR^3$},\\
    u_t(\cdot,0) = 0 & \text{in $\bR^3$}.
\end{cases}
\end{equation}
For our purposes, the solution can be defined via Fourier multipliers as
\begin{align*}
    u(\cdot,t) = \cos\lc (-\Delta)^{1/2}t \rc u_0 = \cF^{-1}\lc \cos(2\pi |\cdot| t) \cF{u_0}\rc,
\end{align*}
where $\cF$ denotes the Fourier transform, defined by $\cF{u}(\xi)\coloneqq\int_{\bR^3} u(x)\exp(-2\pi i x\cdot\xi)\,\diff{x}$. The solution also admits an explicit integral representation, known as \textit{Kirchoff's formula} \cite[Theorem 2, Section 2.4]{Ev}, valid for $u_0\in C^2(\bR^3)$:
\begin{equation}
\label{eq:kirchoff}
    u(x,t) =
    \frac{1}{4\pi t^2} \int_{\partial B_t(x)}
    \lc u_0(y)+\nabla{u_0}(y)\cdot(y-x) \rc\,\diff{\sigma(y)}.
\end{equation}
In this case, $u\in C^2(\bR^3\times[0,+\infty))$.

For $T>0$, we use $V\colon L^2(K)\rightarrow C^0([0,T];L^2(\bR^3))$ to denote the operator mapping an initial datum $u_0=u_0(x)$ to $u=u(x,t)$, $x\in\bR^3$ and $t\in[0,T]$, namely to the corresponding solution of the initial value problem \eqref{eq:waveIVP}.

In the sequel, we will be dealing with the restriction of the solution $u$ to a smooth embedded surface $\Sigma\subset\bR^3$ such that $d(K,\Sigma)>0$. If $u_0\in C_K^2(\bR^3)$, where $C_K^2(\bR^3)$ denotes the space of $C^2$ functions supported in $K$, then $u\in C^2(\bR^3\times[0,+\infty))$ and therefore the operator $U\colon C_K^2(\bR^3)\rightarrow C^2(\Sigma\times[0,T])$, given by $$U u_0\coloneqq V u_0|_{\Sigma\times[0,T]},$$ is well-defined. We assume that the following stability bounds hold.
\begin{assumption}
\label{ass:stability}
There exists a compact set $K'\subset\bR^3$, with $d(K',\Sigma)>0$ and $K\subset\mathrm{int}(K')$, and there exists constants $c,C,T>0$ such that following stability estimates hold for every $u_0\in C_{K'}^2(\bR^3)$:
\begin{align*}
    c\|u_0\|_{L^2(\bR^3)} \leq &\|U u_0\|_{L^2(0,T;L^2(\Sigma))} \leq
    C\|u_0\|_{L^2(\bR^3)},\\
    &\|U u_0\|_{L^2(0,T;H^1(\Sigma))} \leq
    C\|u_0\|_{H^1(\bR^3)}.
\end{align*}
\end{assumption}
Here $H^1(\Sigma)$ denotes the Sobolev space of $L^2$ functions (with respect to the metric induced by $\bR^3$) on the manifold $\Sigma$ that are square summable together with their Lie derivatives. This assumption implies in particular that it is possible to extend $U$ by continuity and density to a bounded operator $U\colon L^2(K')\rightarrow L^2(0,T;L^2(\Sigma))$ such that $U(H_0^1(K'))\subset L^2(0,T;H^1(\Sigma))$. The operator $U$ so defined is an isomorphism onto its image. Notice that the lower bound in the first inequality implies a restriction on $T$, which should be sufficiently large.

The following Proposition, proved in Sec.~\ref{sec:spherical}, shows that Assumption~\ref{ass:stability} holds for spheres $\partial B_R$ for $R>0$; here $B_R$ is the ball in $\bR^3$ of radius $R$ centered in the origin.
\begin{proposition}
\label{lem:L2_stability}
Let $R>1$ be such that $K'\subset B_R$, where $K'$ is a compact set. There exists $T>0$ and constants $c,C>0$, depending only on $R$ and on $d(K',\partial B_R)$, such that Assumption~\ref{ass:stability} is satisfied.
\end{proposition}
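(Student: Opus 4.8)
The plan is to reduce the three estimates in Assumption~\ref{ass:stability} to known properties of the spherical mean Radon transform with centres on $\partial B_R$, together with the trace identity of \cite{rakesh2}. First I would rewrite the boundary data in terms of spherical means: for $u_0\in C^2_{K'}(\bR^3)$ and $x\in\partial B_R$, Kirchhoff's formula \eqref{eq:kirchoff} gives $(Uu_0)(x,t)=\partial_t\big(t\,M_R u_0(x,t)\big)$, where $M_R u_0(x,t)$ denotes the average of $u_0$ over the sphere of radius $t$ centred at $x$. Since $K'\subset B_R$ is compact with $d(K',\partial B_R)>0$, we have $r'\coloneqq\max_{y\in K'}|y|=R-d(K',\partial B_R)<R$, and the sphere $\partial B_t(x)$ meets $\supp u_0\subset K'$ only for $t\in[\,d(K',\partial B_R),\,R+r'\,]$. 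By the strong Huygens principle in $\bR^3$, $(Uu_0)(x,\cdot)$ is supported in that interval for every $x\in\partial B_R$; hence, taking $T\coloneqq 2R$ — which depends only on $R$ — the truncation of the full-time boundary data to $[0,T]$ loses no information, and $\|Uu_0\|_{L^2(0,T;L^2(\partial B_R))}$ equals the corresponding norm over all times.

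Next, I would invoke the trace identity of \cite{rakesh2} — which itself rests on \cite{rakesh1} and on the mapping properties of the spherical mean Radon transform with centres on $\partial B_R$ — applied to the ball $\Omega=B_R$. It provides the two-sided bound $\|Uu_0\|_{L^2(0,\infty;L^2(\partial B_R))}\asymp\|u_0\|_{L^2(\bR^3)}$, with constants depending only on $R$ and $d(K',\partial B_R)$: in words, on a sphere the generic $\tfrac12$-derivative loss in the boundary trace estimate does not occur, and, in addition, the boundary data controls $u_0$ from below. Combined with the time truncation of the previous step — which also shows that $T=2R$ is large enough for the observability direction, since every point of $K'$ lies on a sphere $\partial B_t(x)$ with $x\in\partial B_R$ and $t\le T$ — this yields the first line of Assumption~\ref{ass:stability}.

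The $H^1$ estimate I would deduce from the $L^2$ upper bound just obtained. After extending the $L^2$ estimate by density to a bounded operator $U\colon L^2(K')\to L^2(0,T;L^2(\partial B_R))$, note that for $u_0\in C^2_{K'}$ and $i\in\{1,2,3\}$ the function $\partial_{x_i}u$ solves \eqref{eq:waveIVP} with initial datum $\partial_{x_i}u_0\in C^1_{K'}\subset L^2(K')$, so $\|(\partial_{x_i}u)|_{\partial B_R}\|_{L^2(0,T;L^2(\partial B_R))}\le C\|u_0\|_{H^1(\bR^3)}$. Since the $H^1(\partial B_R)$-norm is controlled by the $L^2$-norms of the function and of its Lie derivatives along a fixed finite family of tangent vector fields — for instance the angular momentum fields $x_k\partial_{x_j}-x_j\partial_{x_k}$, which have bounded smooth coefficients near $\partial B_R$ — and each such Lie derivative of $u|_{\partial B_R}$ is the restriction to $\partial B_R$ of a linear combination of the $\partial_{x_i}u$, combining these observations with the previous step gives $\|Uu_0\|_{L^2(0,T;H^1(\partial B_R))}\le C\|u_0\|_{H^1(\bR^3)}$, i.e.\ the second line of Assumption~\ref{ass:stability}.

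The step I expect to be the main obstacle is the \emph{lower} bound in the second paragraph — the observability estimate in the clean $L^2$-to-$L^2$ scale. This is precisely where the special geometry of the sphere is essential, and where one must rely on the trace identity of \cite{rakesh2} (and hence, through \cite{rakesh1}, on the structure of the spherical mean Radon transform) rather than on soft trace or energy arguments, which on a general bounded domain would only give a bound with a $\tfrac12$-derivative loss and no lower bound of this type. The remaining ingredients — Huygens' principle for the time truncation, commuting constant-coefficient differentiation through the solution operator, and the description of $H^1(\partial B_R)$ via tangential vector fields — are routine.
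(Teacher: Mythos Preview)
Your proposal is correct and follows essentially the same route as the paper. The only cosmetic difference is in the $H^1$ step: the paper quotes the gradient trace identity from \cite{rakesh2} directly and then projects $\nabla_{\bR^3}Uu_0$ onto the tangent bundle of $\partial B_R$, whereas you recover the same bound by commuting $\partial_{x_i}$ through the solution operator, applying the $L^2$ upper bound to each $\partial_{x_i}u_0$, and reading off the tangential derivatives via the angular-momentum fields --- identical content, different packaging.
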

It is reasonable to expect that such stability estimates should hold also for more general smooth embedded surfaces $\Sigma\subset\bR^3$, although we were unable to find specific references in the literature for such bounds.

\subsection{Detectors and measurements}
\label{sec:detectors}
The measurements considered involve a finite number of \textit{detectors} on $\Sigma$, which we identify with closed subsets $E_i\subset\Sigma$ with $|E_i|>0$.

In order to derive our main result, we need to make some geometric assumptions on the detectors. In particular, we suppose that there exists a constant $C_{\mathrm{ecc}}>0$, fixed throughout the analysis, such that the following holds for every detector $E_i$:
\begin{equation}
\label{eq:ass_detectors}
    \diam_{\bR^3}(E_i) \leq C_{\mathrm{ecc}} |E_i|^{1/2},
\end{equation}
where the diameter is computed with respect to the Euclidean distance in $\bR^3$. This assumption is a sort of \textit{uniformly bounded eccentricity} condition on the detectors. To convince the reader that this assumption is reasonable -- at least for certain geometries --, we show in the next Proposition that a certain type of scaling (almost) preserves the eccentricity of detectors on the unit sphere $\bS^2\subset\bR^3$, provided they are sufficiently small. We will provide a proof of the Proposition in Sec.~\ref{sec:proof-ass-detectors}.
\begin{proposition}
\label{prop:ass_detectors}
Let $\mu>0$. There exists a constant $\delta=\delta(\mu)>0$ such that the following holds. Let $\tilde{E}\subset B_{\delta}(0)$, where $B_{\delta}(0)$ is the ball of radius $\delta$ in $\bR^2$ centered in $0$, be such that
\begin{align*}
    \diam_{\bR^2}(\tilde{E}) \leq C_{\mathrm{ecc}} \cL^2(\tilde{E})^{1/2}
\end{align*}
for some constant $C_{\mathrm{ecc}}>0$, where $\cL^2$ is the Lebesgue measure in $\bR^2$. Let $p\in\bS^2$, $t\in(0,1]$ and
\begin{align*}
    E_t \coloneqq \exp_p( t\tilde{E} ),
\end{align*}
where $\exp_p\colon T_p\bS^2\rightarrow\bS^2$ is the exponential map with respect to the metric induced by $\bR^3$ on $\bS^2$.\footnote{We identify $\bR^2$ with the tangent space $T_p\bS^2$ via a linear isometry.}

Then
\begin{align*}
    \diam_{\bR^3}(E_t) \leq (1+\mu) C_{\mathrm{ecc}} |E_t|^{1/2}.
\end{align*}
\end{proposition}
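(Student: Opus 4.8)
The plan is to use two elementary facts about the exponential map of the round sphere $\bS^2\subset\bR^3$. Writing $\exp_p(v)=\cos|v|\,p+\tfrac{\sin|v|}{|v|}\,v$ for $0<|v|<\pi$ (and $\exp_p(0)=p$), a direct computation — or the expression $\diff r^2+\sin^2 r\,\diff\theta^2$ of the round metric in geodesic polar coordinates — shows that at each $v\in B_\pi(0)\subset\bR^2\cong T_p\bS^2$ the differential of $\exp_p$ has singular values $1$ (radial direction) and $\tfrac{\sin|v|}{|v|}\le 1$ (tangential directions). Consequently: (a) $\exp_p$ restricted to the convex ball $B_\pi(0)$ is $1$-Lipschitz from the Euclidean metric on $\bR^2$ to the intrinsic distance $d_{\bS^2}$; and (b) the Jacobian of $\exp_p$ equals $\tfrac{\sin|v|}{|v|}$, so that $|\exp_p(A)|=\int_A\tfrac{\sin|x|}{|x|}\,\diff\cL^2(x)$ for every measurable $A\subset B_\pi(0)$, since $\exp_p$ is a diffeomorphism there.

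First I would impose $\delta\in(0,\pi)$, so that (using $t\le 1$) $t\tilde E\subset B_{t\delta}(0)\subset B_\delta(0)\subset B_\pi(0)$. For the diameter: given $v,w\in t\tilde E$, the Euclidean segment $[v,w]$ stays in $B_\pi(0)$, so by (a) its image is a curve on $\bS^2$ of length at most $|v-w|_{\bR^2}$; combining with the elementary bound $|x-y|_{\bR^3}\le d_{\bS^2}(x,y)$ valid on $\bS^2$ gives $|\exp_p(v)-\exp_p(w)|_{\bR^3}\le|v-w|_{\bR^2}$, hence
\[
  \diam_{\bR^3}(E_t)\le\diam_{\bR^2}(t\tilde E)=t\,\diam_{\bR^2}(\tilde E).
\]
For the area: since $|x|\le\delta$ on $t\tilde E$ and $r\mapsto\tfrac{\sin r}{r}$ is decreasing on $(0,\pi)$, fact (b) yields
\[
  |E_t|=\int_{t\tilde E}\frac{\sin|x|}{|x|}\,\diff\cL^2(x)\ \ge\ \frac{\sin\delta}{\delta}\,\cL^2(t\tilde E)=\frac{\sin\delta}{\delta}\,t^2\,\cL^2(\tilde E).
\]

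Putting these together with the hypothesis $\diam_{\bR^2}(\tilde E)\le C_{\mathrm{ecc}}\cL^2(\tilde E)^{1/2}$,
\[
  \diam_{\bR^3}(E_t)\le t\,C_{\mathrm{ecc}}\,\cL^2(\tilde E)^{1/2}=C_{\mathrm{ecc}}\bigl(t^2\cL^2(\tilde E)\bigr)^{1/2}\le C_{\mathrm{ecc}}\Bigl(\frac{\delta}{\sin\delta}\Bigr)^{1/2}|E_t|^{1/2},
\]
and it remains only to pick $\delta=\delta(\mu)\in(0,\pi)$ small enough that $\tfrac{\delta}{\sin\delta}\le(1+\mu)^2$; this is possible because $\tfrac{\delta}{\sin\delta}\to 1$ as $\delta\to0^+$, and the resulting $\delta$ depends on $\mu$ only, as claimed.

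I do not expect a substantial obstacle here: the argument reduces the whole statement to controlling the single Jacobian factor $\tfrac{\sin r}{r}$, which is uniformly close to $1$ on $B_\delta(0)$. The only points needing attention are keeping every relevant set inside the geodesic ball $B_\pi(0)$ — so that $\exp_p$ is simultaneously a diffeomorphism (for the change of variables in (b)) and $1$-Lipschitz (for (a)) — and the measurability of $\tilde E$, which is implicit in the hypothesis. If one preferred to avoid the explicit formula for $\exp_p$, the non-expansiveness in (a) also follows from Rauch's comparison theorem on the positively curved $\bS^2$.
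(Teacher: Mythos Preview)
Your proof is correct and follows the same overall strategy as the paper --- compare $\diam_{\bR^3}(E_t)$ with $\diam_{\bR^2}(t\tilde E)$ and $|E_t|$ with $\cL^2(t\tilde E)$ via elementary properties of $\exp_p$, then combine with the scaling of $\diam$ and $\cL^2$ under dilations. The execution differs in a way worth noting. For the diameter bound, the paper computes $\|\exp_p(v)-\exp_p(w)\|_{\bR^3}$ directly from the formula $\exp_p(v)=\cos|v|\,p+\tfrac{\sin|v|}{|v|}v$, using separate Lipschitz estimates for $r\mapsto\cos r$ and $r\mapsto\tfrac{\sin r}{r}$ on $B_\delta(0)$; this yields a Lipschitz constant $\sqrt{\mu_0^2+(1+\mu_0^2)^2}$ that is only \emph{close} to $1$. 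Your route --- operator norm of $d\exp_p$ is at most $1$, hence $\exp_p$ is $1$-Lipschitz into $(\bS^2,d_{\bS^2})$, then chord $\le$ arc --- gives the sharp constant $1$ immediately and is both shorter and more transparent. For the area bound, the paper only invokes $d\exp_p(0)=I$ and continuity to get $|\det d\exp_p|\ge(1+\mu_0)^{-1}$ near $0$, whereas you identify the Jacobian exactly as $\tfrac{\sin|v|}{|v|}$ and use its monotonicity; again cleaner, and it makes the dependence of $\delta$ on $\mu$ explicit via $\delta/\sin\delta\le(1+\mu)^2$. Both arguments are local and rely on the same explicit formula for $\exp_p$, so neither generalizes more easily than the other, but yours packages the geometry (non-expansiveness from nonnegative curvature, if one likes) more conceptually.
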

In what follows, we fix a family $\cF=\{E_1,\dots,E_N\}$ of closed subsets, referred to as \textit{detectors}, on $\Sigma$ that partition the surface in the following sense:
\begin{align*}
    \Sigma = \bigcup_i E_i,\qquad |E_i\cap E_j|=0,\ i\neq j.
\end{align*}
We select $m$ detectors $E_{i_1},\dots,E_{i_m}$ via a random sampling procedure. More precisely, let $\nu=(\nu_1,\dots,\nu_N)$ be the probability distribution defined on $\{1,\dots,N\}$ by
\begin{equation}
\label{eq:probability}
    \nu_i \coloneqq \frac{|E_i|}{|\Sigma|}.
\end{equation}
Notice that the probability of sampling a detector is proportional to its area; in other words, we choose larger detectors with higher probability. We then consider $m$ i.i.d.\ samples $i_1,\dots,i_m\sim\nu$ and the corresponding detectors $E_{i_1},\dots,E_{i_m}$.

The corresponding \textit{measurements} are then given by
\begin{equation}
\label{eq:measurements}
    y_k(t) \coloneqq \frac{1}{|E_{i_k}|}\int_{E_{i_k}} U u_0(y,t) \,\diff{\sigma}(y) + \varepsilon_k(t) \in L^2(0,T),
\end{equation}
where $\varepsilon_k\in L^2(0,T)$ are functions modeling noise on the data.

\subsection{Main result}
\label{sec:main-result}
We now state our main result.
\begin{theorem}
\label{thm:main_result}
Consider the setting introduced in sections \ref{sec:wavelets}, \ref{sec:freespace}, \ref{sec:detectors} and suppose that \eqref{eq:ass_detectors} is satisfied. Let $\Sigma\subset\bR^3$ be a smooth embedded surface such that $d(K,\Sigma)>0$, where $K$ is as in \eqref{eq:Kdef}, and suppose that Assumption~\ref{ass:stability} is satisfied. Then there exist constants $\mu,C_0,C_1,C_2,C_3$, depending only on $T$, on $\Sigma$, on $d(K,\Sigma)$, on the wavelet basis, on the constants $c,C$ in Assumption~\ref{ass:stability}, and on $C_{\mathrm{ecc}}$, such that the following holds.

Consider $u_0\in L^2(B_1)$. Suppose that
\begin{equation}
\label{eq:detector_condition}
    \diam_{\bR^3}(E_i)\leq \mu 2^{-j_0},\qquad i=1,\dots,N,
\end{equation}
for some $j_0\in\bN$ and that $\|P_{\leq j_0}^{\perp} u_0\|_{L^2(\bR^3)}\leq r$. Consider $m$ i.i.d.\ samples $i_1,\dots,i_m$ from the distribution $\nu$ on $\{1,\dots,N\}$ defined in \eqref{eq:probability}. Let $(\varepsilon_k)_{k=1}^m\in L^2(0,T)$ be such that $\|\varepsilon_k\|_{L^2(0,T)}\leq \beta$ for $k=1,\dots,m$ and let $(y_k)_{k=1}^m$ be given by \eqref{eq:measurements}.

Consider a solution $\widehat{u}$ to the following minimization problem:
\begin{equation}
\label{eq:minimization_problem}
    \min_{v\in\cM_{\leq j_0}} \|\Phi v\|_{\ell^1}\colon\qquad
    \frac{1}{m}\sum_{k=1}^m \bigg\|\frac{1}{|E_{i_k}|}\int_{E_{i_k}} U v\,\diff{\sigma}-y_k\bigg\|_{L^2(0,T)}^2 \leq C_3 \eta^2,
\end{equation}
where $\eta\coloneqq \beta+\max\{1,(\min_i|E_i|)^{-1/2}\}\,r$. Let $\gamma\in(0,1)$. If
\begin{align*}
    m \geq C_0 s \max\{j_0\log^3{s},\log(1/\gamma)\},
\end{align*}
then, with probability exceeding $1-\gamma$, the following holds:
\begin{equation}
\label{eq:recovery}
    \|u_0-\widehat{u}\|_{L^2} \leq
    C_1 \frac{\sigma_s(P_{\leq j_0} u_0)_1}{\sqrt{s}} + C_2\big( \beta+\max\{1,(\min_i|E_i|)^{-1/2}\}\,r \big).
\end{equation}
\end{theorem}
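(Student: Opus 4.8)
The plan is to derive Theorem~\ref{thm:main_result} from the abstract recovery theorem \cite[Theorem 2.4]{split2} (or, with minor modifications, \cite[Theorem 3.11]{split1}) for subsampled linear inverse problems, by instantiating its three structural hypotheses in the photoacoustic setting: a \emph{quasi-diagonalization} bound for the forward map, a \emph{balancing property} controlling the discretization of the forward map, and \emph{coherence bounds} for the detector functionals against the wavelet dictionary. The abstract data are: the Hilbert space $\cH_1$ of \eqref{eq:definitionH1} with analysis dictionary $(\phi_{j,n})_{(j,n)\in\Gamma}$; the forward map $U\colon L^2(K')\to L^2(0,T;L^2(\Sigma))$ provided by Assumption~\ref{ass:stability}; the vector-valued detector functionals $F_i v\coloneqq |E_i|^{-1}\int_{E_i} Uv\,\diff\sigma\in L^2(0,T)$, sampled with probabilities $\nu_i=|E_i|/|\Sigma|$; and the discretized forward map $F=P_{\cF}U$, where $P_{\cF}$ is the orthogonal projection onto functions that are piecewise constant in space with respect to $(E_i)_{i=1}^N$. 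The compatibility needed for the random-sampling step is the identity $\|Fv\|_{L^2(0,T;L^2(\Sigma))}^2=\sum_i|E_i|\,\|F_iv\|_{L^2(0,T)}^2=|\Sigma|\,\bE_{i\sim\nu}\|F_iv\|_{L^2(0,T)}^2$, so that the sampled measurements form, in expectation, an isometric copy of $F$ up to the harmless constant $|\Sigma|$.

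I would then verify the three ingredients. \emph{Quasi-diagonalization:} the $L^2$ two-sided estimate in Assumption~\ref{ass:stability} combined with the Littlewood--Paley equivalence \eqref{eq:litt_paley} shows $U$ is bounded above and below on $\cH_1$, while complex interpolation of the $L^2$ and $H^1$ bounds (Lemma~\ref{lem:H12_stability}) gives $\|Uv\|_{L^2(0,T;H^{1/2}(\Sigma))}\le C\|v\|_{H^{1/2}(\bR^3)}$; since $\|v\|_{H^{1/2}(\bR^3)}^2\asymp\sum_{j\le j_0}2^{j}\|P_jv\|_{L^2}^2\le 2^{j_0}\|v\|_{L^2}^2$ on $\cM_{\leq j_0}$, one gets $\|Uv\|_{L^2(0,T;H^{1/2}(\Sigma))}\lesssim 2^{j_0/2}\|v\|_{L^2}$ for $v\in\cM_{\leq j_0}$. \emph{Balancing property} (Proposition~\ref{prop:fond_prop1}): the approximation bound $\|(I-P_{\cF})u\|_{L^2(0,T;L^2(\Sigma))}\lesssim(\max_i\diam_{\bR^3}(E_i))^{1/2}\|u\|_{L^2(0,T;H^{1/2}(\Sigma))}$ -- obtained through the electrode-approximation arguments of \cite{hyvonen,rondi} -- together with the previous estimate and the detector condition \eqref{eq:detector_condition} yields $\|(I-P_{\cF})Uv\|_{L^2(0,T;L^2(\Sigma))}\lesssim(\mu 2^{-j_0})^{1/2}2^{j_0/2}\|v\|_{L^2}=C\mu^{1/2}\|v\|_{L^2}$ on $\cM_{\leq j_0}$, so a sufficiently small $\mu$ forces $\theta\coloneqq C\mu^{1/2}$ below the threshold required by the abstract theorem. \emph{Coherence bounds} (Proposition~\ref{prop:fond_prop2}): for a detector $E_i$ and a wavelet $\phi_{j,n}$, the strong Huygens principle in odd space dimension confines $U\phi_{j,n}(y,\cdot)$, hence $F_i\phi_{j,n}$, to a time window of length $O(2^{-j})$ determined by $d(E_i,\supp\phi_{j,n})$; combined with the normalization and the tensor structure of $\phi_{j,n}$ this gives scale-wise local coherence estimates that are flat up to logarithmic factors, so that the sample complexity involves only the logarithm of $\dim\cM_{\leq j_0}$, which is $\asymp j_0$.

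Finally, I would feed these ingredients into \cite[Theorem 2.4]{split2} with sparsity $s$, $\asymp j_0$ active scales and failure probability $\gamma$: provided $m\ge C_0 s\max\{j_0\log^3 s,\log(1/\gamma)\}$, with probability at least $1-\gamma$ every minimizer of the $\ell^1$ program with fidelity level $\eta'$ satisfies $\|P_{\leq j_0}u_0-\widehat u\|_{L^2}\lesssim\sigma_s(P_{\leq j_0}u_0)_1/\sqrt s+\eta'$. To conclude I would (i) identify $\eta'$ and verify feasibility: writing $y_k=F_{i_k}u_0+\varepsilon_k=F_{i_k}(P_{\leq j_0}u_0)+F_{i_k}(P_{\leq j_0}^{\perp}u_0)+\varepsilon_k$ and bounding, by Cauchy--Schwarz and Assumption~\ref{ass:stability}, $\|F_{i_k}(P_{\leq j_0}^{\perp}u_0)\|_{L^2(0,T)}\le|E_{i_k}|^{-1/2}\|U(P_{\leq j_0}^{\perp}u_0)\|_{L^2(0,T;L^2(\Sigma))}\le C(\min_i|E_i|)^{-1/2}r$, so that $P_{\leq j_0}u_0$ is feasible for \eqref{eq:minimization_problem} for a suitable $C_3$ with $\eta'\asymp\eta=\beta+\max\{1,(\min_i|E_i|)^{-1/2}\}r$; and (ii) reinstate the truncation error via $\|u_0-\widehat u\|_{L^2}\le\|P_{\leq j_0}u_0-\widehat u\|_{L^2}+\|P_{\leq j_0}^{\perp}u_0\|_{L^2}\le\|P_{\leq j_0}u_0-\widehat u\|_{L^2}+r$. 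Collecting the terms gives \eqref{eq:recovery}.

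The main obstacle is the coherence bound of Proposition~\ref{prop:fond_prop2}: quantifying via Huygens' principle and the separable-wavelet construction how little a single small detector registers of a single high-frequency wavelet, uniformly in scale, is the genuinely delicate analytic input, as is securing the balancing property with an explicit constant, which rests on the sharp (no loss of half a derivative) trace bound hidden in Assumption~\ref{ass:stability} -- available for spheres by Proposition~\ref{lem:L2_stability}. By contrast, matching the normalization of the $F_i$ to the hypotheses of \cite[Theorem 2.4]{split2}, and the bookkeeping that produces the factor $\max\{1,(\min_i|E_i|)^{-1/2}\}$, are routine.
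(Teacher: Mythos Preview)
Your proposal is correct and follows essentially the same route as the paper: apply the abstract recovery result \cite[Theorem 2.4]{split2} after verifying (i) the quasi-diagonalization bounds via Assumption~\ref{ass:stability}, (ii) the balancing property via the $H^{1/2}$ electrode-approximation argument combined with Lemma~\ref{lem:H12_stability} and the Littlewood--Paley property (Proposition~\ref{prop:fond_prop1}), and (iii) the coherence bounds via Huygens' principle and the tensor-wavelet geometry (Proposition~\ref{prop:fond_prop2}), then reconcile the noise/truncation bookkeeping with the abstract setting. The only cosmetic difference is your choice of normalization $F_i v=|E_i|^{-1}\int_{E_i}Uv\,\diff\sigma$ versus the paper's $|E_i|^{-1/2}\int_{E_i}Uv\,\diff\sigma$, which---as you note---is routine and absorbed into the constants; also, the coherence bound in Proposition~\ref{prop:fond_prop2} is in fact flat in the scale $j$ (no log loss), which is slightly stronger than your ``flat up to logarithmic factors'' and is what makes the $j_0$ in the sample complexity come solely from $\log|\Lambda_{\leq j_0}|$.
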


A few remarks are in order.
\begin{remark}
Condition \eqref{eq:detector_condition} on the size of the detectors guarantees stable recovery for signals in $\cM_{\leq j_0}$. This is a reasonable assumption: indeed, $\cM_{\leq j_0}$ is generated by wavelets at resolution at most $j_0$; those at the highest resolution $j_0$ are supported in a set of diameter $\asymp 2^{-j_0}$. It is plausible that the measurements should be given by averages on detectors of approximately the same size in order to get stable recovery.
\end{remark}

\begin{remark}
The dependence of the recovery estimate \eqref{eq:recovery} on the truncation level is given by
\begin{align*}
    \max\{1,(\min_i|E_i|)^{-1/2}\}\,r.
\end{align*}
This quantity is optimized by choosing partitions with quasi-uniform detectors of maximal size, namely families $(E_i^{j_0})_{i=1}^{N_{j_0}}$ for each $j_0\in\bN$ such that
\begin{align*}
    c2^{-2j_0} \leq |E_i^{j_0}| \leq C 2^{-2j_0},\qquad \forall i=1,\dots,N_{j_0},
\end{align*}
where $c,C>0$ are constants independent of $j_0$. In this case, the error can be bounded by
\begin{align*}
    \max\{1,(\min_i |E_i|)^{-1/2}\}r \lesssim 2^{j_0} r.
\end{align*}
\end{remark}

The proof of Theorem~\ref{thm:main_result} is based on results from \cite{split1,split2}. As remarked after the statement of \cite[Theorem 3.5]{split1}, the abstract framework is suited for a uniform recovery result except for the truncation error -- see \cite[Proposition 5.8]{split1}. However, Theorem~\ref{thm:main_result} can be easily converted to a uniform recovery result; this is due to the fact that the inversion of the full forward map $U$ is well-posed due to Assumption~\ref{ass:stability}. In particular, this implies that, with high probability, the sampled detectors $E_{i_1},\dots,E_{i_m}$ achieve stable recovery of \textit{all} $s$-sparse signals.

\subsection{General sensing patterns}
\label{sec:new-result}
We can easily extend our result to other sensing systems that have been considered in the literature.

Our main result -- Theorem~\ref{thm:main_result} -- assumes that each measurement is given by the averaged pressure wave on a single detector. In practice, it is possible to leverage the principles behind the so-called \textit{single-pixel camera} systems \cite{duarte2008,goda2009,li2015} to achieve better compression rates -- see, for instance, \cite{huynh2016,betcke2017}. These systems simplify the task of obtaining highly resolved spatial measurements by converting it into sequential recording in time using a fixed array of detectors. For these types of sensing patterns, multiple detectors activate at each measurement; the measurement is then given by a weighted sum of the averaged pressure wave on each detector.

We now show how it is possible to extend our framework to this setting. We assume, for simplicity, that there exists $C_u\geq 1$, fixed throughout the analysis, such that the detectors $\cF=(E_i)_{i=1}^N$ satisfy the following \textit{quasi-uniformity} assumption:
\begin{equation}
\label{eq:quasi-unif}
    \frac{1}{C_u} |E_i| \leq |E_j| \leq C_u |E_i|,\qquad i,j=1,\dots,N.
\end{equation}
This implies in particular that
\begin{align*}
     \frac{|\Sigma|}{N C_u} \leq |E_i| \leq \frac{|\Sigma| C_u}{N}.
\end{align*}
We suppose that the pressure wave measurements on the detectors is corrupted by noise $\varepsilon_k\in L^2(0,T)$ for $k=1,\dots,m$.

For convenience of notation, we define the following averaging operators:
\begin{align*}
    M_i\colon L^2(0,T;L^2(\Sigma))\rightarrow L^2(0,T),\qquad
    M_i{v}(t) \coloneqq 
    \frac{1}{|E_i|} \int_{E_i} v(y,t)\,\diff{\sigma}(y)
\end{align*}
and the associated operator $M$ defined by
\begin{align*}
    Mv(t) \coloneqq \lc M_1 v(t),\dots,M_N v(t) \rc\in \big(L^2(0,T)\big)^N.
\end{align*}
We fix a unitary matrix $A\in\bC^{N\times N}$.\footnote{A more general result would hold for invertible matrices; in this case, the constants appearing in the estimates would also depend on $\max\{\|A\|,\|A^{-1}\|\}$, where $\|\cdot\|$ denotes the operator norm.} Let $i_1,\dots,i_m$ be $m$ i.i.d.\ samples from the uniform distribution on $\{1,\dots,N\}$. The measurements in the new setting are given by
\begin{equation}
\label{eq:new-measurements}
    y_k(t) \coloneqq P_{i_k}AMU u_0 + \varepsilon_k,\qquad k=1,\dots,m,
\end{equation}
where $P_i\colon \big(L^2(0,T)\big)^N \rightarrow L^2(0,T)$ is the projection on the $i$-th component. Notice that, for $A=I$, the measurements reduce to the ones defined in \eqref{eq:measurements} with $\nu_i\equiv 1/\sqrt{N}$, thanks to \eqref{eq:quasi-unif}.

We now state the new result; a sketch of the proof is provided in Sec.~\ref{sec:proof-new-result}.
\begin{theorem}
\label{thm:new-result}
Consider the setting introduced in sections \ref{sec:wavelets}, \ref{sec:freespace}, \ref{sec:detectors} and suppose that \eqref{eq:ass_detectors} and \eqref{eq:quasi-unif} are satisfied. Let $\Sigma\subset\bR^3$ be a smooth embedded surface such that $d(K,\Sigma)>0$, where $K$ is as in \eqref{eq:Kdef}, and suppose that Assumption~\ref{ass:stability} is satisfied. There exist constants $\mu,C_0,C_1,C_2,C_3$, depending only on $T$, on $\Sigma$, on $d(K,\Sigma)$, on the wavelet basis, on the constants $c,C$ in Assumption~\ref{ass:stability}, on $C_{\mathrm{ecc}}$ and on $C_u$, such that the following holds.

Consider $u_0\in L^2(B_1)$. Suppose that \eqref{eq:detector_condition} is satisfied for some $j_0\in\bN$ and that $\|P_{\leq j_0}^{\perp} u_0\|_{L^2(\bR^3)}\leq r$. Consider $m$ i.i.d.\ samples $i_1,\dots,i_m$ from the uniform distribution on $\{1,\dots,N\}$. Let $(\varepsilon_k)_{k=1}^m\in L^2(0,T)$ be such that $\|\varepsilon_k\|_{L^2(0,T)}\leq \beta$ for $k=1,\dots,m$ and let $(y_k)_{k=1}^m$ be given by \eqref{eq:new-measurements}. Let
\begin{equation}
\label{eq:new-coherence}
    B \coloneqq \max_{ \substack{i=1,\dots,N \\ (j,n)\in\Lambda_{\leq j_0}} } \| P_i A M U \phi_{j,n} \|_{L^2(0,T)}.
\end{equation}
Consider a solution $\widehat{u}$ to the following minimization problem:
\begin{equation}
    \min_{v\in\cM_{\leq j_0}} \|\Phi v\|_{\ell^1}\colon\qquad
    \frac{1}{m}\sum_{k=1}^m \|P_{i_k} A M U v-y_k \|_{L^2(0,T)}^2 \leq C_3 \eta^2,
\end{equation}
where $\eta\coloneqq \beta+\sqrt{N} r$. Let $\gamma\in(0,1)$. If
\begin{equation}
\label{eq:new-sample-complexity}
    m \geq C_0 \tau \max\{j_0\log^3{\tau},\log(1/\gamma)\},\qquad \tau\coloneqq B^2 s,
\end{equation}
then, with probability exceeding $1-\gamma$, the following holds:
\begin{equation}
    \|u_0-\widehat{u}\|_{L^2} \leq
    C_1 \frac{\sigma_s(P_{\leq j_0} u_0)_1}{\sqrt{s}} + C_2\big( \beta+\sqrt{N} r \big).
\end{equation}
\end{theorem}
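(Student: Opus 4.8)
The plan is to reduce Theorem~\ref{thm:new-result} to Theorem~\ref{thm:main_result}, or rather to the same abstract machinery from \cite{split1,split2} that underlies it, by checking that the three structural ingredients listed in the ``Main elements'' subsection --- the quasi-diagonalization/stability bounds, the balancing property, and the coherence bounds --- survive the replacement of the single-detector averaging $M_i$ by the mixed measurement functional $v\mapsto P_{i_k}AMUv$. The stability of the full forward map $U$ (Assumption~\ref{ass:stability}) is unchanged, since $A$ is unitary and $M$ composed with the natural Parseval-type identity for the piecewise-constant projection is an isometry up to the quasi-uniformity constant $C_u$; concretely, $\|AMUv\|^2 = \|MUv\|^2 = \sum_i |E_i|^{-2}|\int_{E_i}Uv|^2$, and under \eqref{eq:quasi-unif} this is comparable to $\frac{N}{|\Sigma|}\|P_{\cF}Uv\|_{L^2(0,T;L^2(\Sigma))}^2$, so the quasi-diagonalization bounds transfer with constants depending additionally on $C_u$. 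This also explains the $\sqrt{N}$ prefactor on $r$ in the definition of $\eta$ and in the final error bound: it is exactly the operator norm of the discretized forward map in the chosen normalization.

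First I would set up the discretized measurement operator in the form demanded by \cite[Theorem~2.4]{split2}: a finite index set (here $\{1,\dots,N\}$), a sampling measure (here uniform), and measurement vectors $F_i \coloneqq (P_i AMU\phi_{j,n})_{(j,n)\in\Lambda_{\leq j_0}}$ with values in $L^2(0,T)$. The relevant coherence parameter is precisely $B$ in \eqref{eq:new-coherence}, and the local coherence / boundedness hypotheses of the abstract theorem are met because $\sum_i \|P_i AMU v\|^2 = \|AMUv\|^2 \asymp_{C_u} \|P_{\cF}Uv\|^2 \asymp \|Uv\|^2 \asymp \|v\|^2$ for $v\in\cM_{\leq j_0}$, where the middle step uses the balancing property \eqref{eq:projection_piecewise} --- valid once $\mu$ in \eqref{eq:detector_condition} is chosen small enough, exactly as in Theorem~\ref{thm:main_result}. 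Then the sample-complexity condition produced by the abstract theorem reads $m\gtrsim B^2 s \cdot(\text{log factors in }s,j_0,1/\gamma)$, which is \eqref{eq:new-sample-complexity} with $\tau = B^2 s$. The truncation/sparsity error terms come out of the abstract theorem verbatim, with the truncation error scaled by the operator norm $\asymp\sqrt{N}$ of the discretized map, giving the $\sqrt{N}r$ contribution; the choice $\eta = \beta + \sqrt{N}r$ in the constraint is then the correct total data-fidelity budget.

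The one genuinely new point, and the step I expect to be the main obstacle, is controlling the coherence $B$ --- or rather, arranging that $B$ is not pathologically large so that $\tau = B^2 s$ is a meaningful improvement. For $A=I$ one has $B^2 \asymp 1/N$ (each wavelet is averaged over essentially one detector among $N$, with mass spread by Huygens' principle across $O(1)$ detectors), recovering Theorem~\ref{thm:main_result}. For a general unitary $A$, $B$ can be as large as $O(1)$ in the worst case (a single row of $A$ concentrated on the ``wrong'' detectors), but the interesting regime --- the single-pixel-camera setup with $A$ a Hadamard-type or randomized orthogonal matrix --- gives $B^2 \asymp 1/N$ again by incoherence between $A$ and the localized measurement supports. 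So the proof of the theorem as stated simply carries $B$ as a parameter; the work is in the remark/examples afterwards showing $B^2\lesssim 1/N$ for the relevant $A$, which would again use Huygens' principle (finite speed of propagation localizes $MU\phi_{j,n}$ to $O(1)$ detectors for $\Sigma$ at positive distance from $K$, up to the time-integration tail) together with the flatness of a single row of a Hadamard matrix.

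Finally, I would note that the conversion to uniform recovery over all $s$-sparse signals goes through verbatim as in the discussion after Theorem~\ref{thm:main_result}, since it relies only on the well-posedness of the inversion of the full map $U$, which is untouched. The remaining bookkeeping --- tracking how the constants $C_0,\dots,C_3$ and $\mu$ acquire a dependence on $C_u$, and verifying that the $L^2(0,T)$-valued (rather than scalar) nature of the measurements is already handled by the vector-valued formulation of \cite[Theorem~2.4]{split2} --- is routine and I would relegate it to a brief paragraph rather than spell it out.
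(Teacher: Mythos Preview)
Your overall approach is correct and matches the paper's proof: apply the abstract framework of Theorem~\ref{thm:split2}, use unitarity of $A$ together with quasi-uniformity \eqref{eq:quasi-unif} to transfer the quasi-diagonalization bounds from $P_{\cF}U$ to $\tfrac{1}{\sqrt{N}}AMU$, carry the balancing property over unchanged, and take the coherence parameter $B$ directly from \eqref{eq:new-coherence}. The $\sqrt{N}$ in the truncation term arises exactly as you say.

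However, your side discussion of the scaling of $B$ is wrong, and the error is worth correcting because it reflects a misreading of the wave propagation. For $A=I$ one has $B\asymp 1$ (a constant), not $B^2\asymp 1/N$. Indeed, by the footnote after \eqref{eq:new-coherence}, $P_iMU = |E_i|^{-1/2}F_i$, so Proposition~\ref{prop:fond_prop2} gives $\|P_iMU\phi_{j,n}\|\le B_0$ independently of $N$; with $\tau=B^2s\asymp s$ this \emph{does} recover the sample complexity of Theorem~\ref{thm:main_result}, whereas your claim $\tau\asymp s/N$ would give an absurdly small $m$. The physical picture ``mass spread across $O(1)$ detectors'' is also incorrect for tensor wavelets: the spherical wavefront from a source of size $\asymp 2^{-j}$ intersects $\Sigma$ in an annular strip passing through $\asymp 2^{2j_0-j}\gg 1$ detectors at each time. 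It is precisely this spreading that makes $B$ a constant for $A=I$ and makes the Hadamard mixing unnecessary for tensor wavelets (the paper invokes Hadamard-type $A$ only for hypothetical directional frames such as curvelets, where $MU\phi_{j,n,l}$ \emph{would} be sparse). None of this affects the proof of the theorem as stated, since $B$ is indeed carried as a free parameter, but you should drop the paragraph about ``the main obstacle'' being the control of $B$: no such control is needed, and your heuristics for it are off by a factor of $N$.
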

The parameter $ B $ in \eqref{eq:new-coherence} plays a crucial role in Theorem~\ref{thm:new-result}, as it affects the sample complexity in \eqref{eq:new-sample-complexity}. It is closely related to the concept of \textit{coherence} in compressed sensing. We now discuss the role of coherence in two different settings.  

As previously noted, when $ A = I $, the setting of Theorem~\ref{thm:new-result} reduces to that of Theorem~\ref{thm:main_result} with $\nu_i\equiv 1/\sqrt{N}$. By Proposition~\ref{prop:fond_prop2}, we obtain that, in this case, $ B $ can be bounded by a constant independent of $ j_0 $.\footnote{Indeed, notice that, with reference to the notation of Sec.~\ref{sec:proof-main-result}, the following identity holds: $$P_i M U = |E_i|^{-1/2} F_i .$$} This bound arises from the geometry of tensorized wavelets. Indeed, high-frequency wavelets exhibit low sensitivity to directionality; as a result, their energy propagates across a wide range of directions. Consequently, the corresponding pressure wave can be detected by localized measurements on a sparse array of detectors.  

Now, suppose that Theorem~\ref{thm:new-result} also holds when the wavelet dictionary $ (\phi_{j,n})_{j,n} $ is replaced by a frame $ (\phi_{j,n,l})_{j,n,l} $ whose elements exhibit higher sensitivity to directionality. Examples of such dictionaries include \textit{curvelets} and \textit{shearlets} \cite{CD,GL,kuty_cartoon}. In this case, due to the propagation of singularities for the wave equation, the energy of the pressure wave associated with high-frequency elements in the dictionary becomes localized to a small number of detectors aligned with specific directions. In other words, we expect the time-dependent vector $ MU\phi_{j,n,l} $ to be sparse. For this reason, it is preferable to adopt a different sensing pattern that promotes incoherence in the measurements. One possible choice is to use a binary matrix $ A $ with components in $ \{\pm 1/\sqrt{N}\} $; examples include, for instance, Bernoulli matrices and scrambled Hadamard matrices. Another possibility is provided by sparse matrices such as lossless expanders, for which theoretical CS guarantees exist in the literature. For further information, we refer the reader, for instance, \cite{foucartrauhut} or the appendix in \cite{haltmeier2016}.

We notice that the general framework from \cite{split1,split2} applies only under the assumption that the sparsifying dictionary $ (\phi_{j,n})_{j,n} $ is either an orthonormal basis or a Riesz system. As a result, it cannot accommodate redundant frames such as curvelets and shearlets. We leave such extensions to future work.

\section{Proofs}
\label{sec:proofs}
In this section, we provide the proof of the statements from Sec.~\ref{sec:setting}.

\subsection{Proof of Proposition~\ref{lem:L2_stability}}
\label{sec:spherical}
The thesis easily follows from the following two identities, proved in \cite[Theorem 1]{rakesh2}:
\begin{align*}
    \frac{R}{2} \int_{\bR^3} |u_0(x)|^2\,\diff{x} &=
    \int_0^{+\infty} \int_{\partial B_R} t |Uu _0(y,t)|^2\,\diff{\sigma(y)}\diff{t},\\
    \frac{R}{2} \int_{\bR^3} |\nabla_{\bR^3}{u_0}(x)|^2\,\diff{x} &=
    \int_0^T \int_{\partial B_R} t |\nabla_{\bR^3}{U u_0}(y,t)|^2\,\diff{\sigma(y)}\diff{t},
\end{align*}
valid for every $u_0\in C^{\infty}(B_R)$; here $\nabla_{\bR^3}$ denotes the gradient with respect to the Euclidean metric in $\bR^3$ in the spatial variable $y$. By density and continuity, it is clear that such identities hold also for $u_0\in C^2(B_R)$.

If $u_0\in C^2_{K'}(B_R)$, then $U u_0(x,t)=0$ for $x\in\partial B_R$ and $t\in\bR^+\setminus[d(K',\partial B_R),2R]$. We deduce that
\begin{align*}
    c\|u_0\|_{L^2(\bR^3)} \leq \|U u_0\|_{L^2(0,T;L^2(\partial B_R))} \leq C\|u_0\|_{L^2(\bR^3)}
\end{align*}
and also
\begin{equation}
\label{eq:H1nonproj}
    \int_{0}^T \int_{\partial B_R} |\nabla_{\bR^3}U u_0(y,t)|^2\diff{\sigma}(y)\diff{t} \leq 2C^2\|u_0\|_{H^1(\bR^3)}^2,
\end{equation}
where $c = \sqrt{\frac{R}{2T}}$, $C = \sqrt{\frac{R}{2 d(K',\Sigma)}}$ and $T=2R$.

Notice that, for a smooth function $v$ defined on $\bR^3$, the following identity holds
\begin{align*}
    \nabla_{\Sigma} (v|_{\partial B_R}) = P_{\partial B_R}\nabla_{\bR^3}{v},
\end{align*}
where $P_{\partial B_R}$ is the projection on the tangent bundle of $\partial B_R$. This, together with \eqref{eq:H1nonproj}, implies that
\begin{align*}
    \int_{0}^T \int_{\partial B_R} |\nabla_{\partial B_R}U u_0(y,t)|^2\diff{\sigma}(y)\diff{t} &\leq 
    \int_{0}^T \int_{\partial B_R} |\nabla_{\bR^3}U u_0(y,t)|^2\diff{\sigma}(y)\diff{t} \\ &\leq C^2\|u_0\|_{H^1(\bR^3)}^2,
\end{align*}
which in turn implies the $H^1$ stability estimate in Assumption~\ref{ass:stability}.

\subsection{Proof of Proposition~\ref{prop:ass_detectors}}
\label{sec:proof-ass-detectors}
Let $\delta\in(0,1)$. We have that
\begin{align*}
    d\exp_p 0 = I.
\end{align*}
Therefore, choosing possibly a smaller $\delta$, we can suppose that
\begin{equation}
\label{eq:ass_detectors_determinant}
    |\det(d\exp_p v)| \geq \frac{1}{1+\mu_0},\qquad v\in T_p\bS^2\cap B_{\delta}(0),
\end{equation}
where $\mu_0\in(0,1)$ is to be assigned later.

If $v,w\in B_{\delta}(0)$ and $\delta\leq\mu_0$, then
\begin{align*}
    |\cos(\|v\|)-\cos(\|w\|)| \leq
    ( \sin{\delta} ) \big| \|v\| - \|w\| \big| \leq \mu_0 \|v-w\|.
\end{align*}
Moreover, choosing a possibly smaller $\delta$, we can suppose that, if $v,w\in B_{\delta}(0)\setminus\{0\}$,
\begin{align*}
    \bigg| \frac{\sin(\|v\|)}{\|v\|}-\frac{\sin(\|w\|)}{\|w\|} \bigg| \leq \mu_0 \big| \|v\|-\|w\| \big| \leq \mu_0\|v-w\|.
\end{align*}
An explicit formula for the exponential map on $\bS^2$ is given by
\begin{align*}
    \exp_p(v) = \cos(\|v\|)p + \frac{\sin(\|v\|)}{\|v\|} v,\qquad v\in T_p\bS^2\setminus\{0\},
\end{align*}
where $T_p\bS^2$ is identified with $p^{\perp}$ -- see \cite[Example 5.4.1]{absil}. Using the previous computations, we deduce that, for $v,w\in T_p \bS^2\cap B_{\delta}(0)$,
\begin{align*}
    \|\exp_p(v)-\exp_p(w)\|^2 &=
    | \cos(\|v\|)-\cos(\|w\|) |^2 +
    \bigg\| \frac{\sin(\|v\|)}{\|v\|}v-\frac{\sin(\|w\|)}{\|w\|}w \bigg\|^2 \\ &\leq
    \lc \mu_0^2 + (1+\mu_0^2)^2 \rc \|v-w\|^2,
\end{align*}
which implies that, for $t\in(0,1]$,
\begin{align*}
    \diam_{\bR^3}(E_t) \leq
    \sqrt{\mu_0^2 + (1+\mu_0^2)^2}\, \diam_{\bR^2}(t\tilde{E}).
\end{align*}
By the scaling properties of the Euclidean distance and of the Lebesgue measure $\cL^2$ with respect to dilations in $\bR^2$, we have that
\begin{align*}
    \diam_{\bR^2} (t\tilde{E}) \leq C_{\mathrm{ecc}} \cL^2(t\tilde{E})^{1/2},\qquad t\in(0,1].
\end{align*}
By \eqref{eq:ass_detectors_determinant}, we have that
\begin{align*}
    \cL^2(t\tilde{E}) \leq (1+\mu_0) |E_t|.
\end{align*}
Putting everything together, we have that
\begin{align*}
    \diam_{\bR^3}(E_t) \leq \sqrt{\mu_0^2 + (1+\mu_0^2)^2}(1+\mu_0)^{1/2}C_{\mathrm{ecc}}|E_t|^{1/2}.
\end{align*}
The Proposition is proved by choosing a sufficiently small $\mu_0$ such that
\begin{equation}
    \sqrt{\mu_0^2 + (1+\mu_0^2)^2}(1+\mu_0)^{1/2} \leq 1+\mu.
    \qedhere
\end{equation}

\subsection{Proof of the main result}
\label{sec:proof-main-result}
Our proof of Theorem~\ref{thm:main_result} is based on \cite[Theorem 2.4]{split2}; we briefly recall here the setup for the convenience of the reader.

The measurement procedure is modeled via a family of bounded measurement operators $(F_t)_{t\in\cD}\colon\cH_1\rightarrow\cH_2$ between Hilbert spaces $\cH_1,\cH_2$. In our case, $\cH_1$ is given by \eqref{eq:definitionH1}, while it is convenient to write $F_t$ as $F_i $ for $i=1,\dots,N$, where
\begin{equation}
\label{eq:measurement_operators_definition}
    F_i  u_0(t) \coloneqq |E_i|^{1/2}M_i U u_0 = \frac{1}{|E_i|^{1/2}}\int_{E_i} U u_0(y,t)\,\diff{\sigma(y)}.
\end{equation}
The space $\cD$ that parametrizes the measurements corresponds then to $\{1,\dots,N\}$ and $\cH_2$ is $L^2(0,T)$. The sampling procedure is modeled via a probability distribution $\nu$ on $\cD$; in our case, it is given by the probability $\nu=(\nu_1,\dots,\nu_N)$ on ${1,\dots,N}$ defined by \eqref{eq:probability}.\footnote{In \cite{split2}, the probability distribution was defined by a probability density function $f_{\nu}(t)$ with respect to another measure $\mu$ on $\cD$, which in our case is the counting measure on $\{1,\dots,N\}$. Then the correspondence between the notation of the present paper and the notation of \cite{split2} is given by $\nu_i=f_{\nu}(i)$.} The forward map $F\colon\cH_1\rightarrow \lc L^2(0,T)\rc^N$ modeling the collection of all possible measurements is given by
\begin{align*}
    F u_0 \coloneqq (F_i u_0)_{i=1}^N \in (L^2(0,T))^N.
\end{align*}
We now compute the norm of $F u_0$:
\begin{equation}
\label{eq:forward_norm}
\begin{split}
    \|F u_0\|_{\lc L^2(0,T)\rc^N}^2 &=
    \sum_{i=1}^N \|F_i  u_0\|_{L^2(0,T)}^2 \\ &=
    \sum_{i=1}^N \int_{\Sigma}
     \|F_i u_0\|_{L^2(0,T)}^2\frac{\mathbbm{1}_{E_i}(y)}{|E_i|}\,\diff{\sigma(y)} \\ &=
    \int_0^T \int_{\Sigma}
    \bigg| \sum_{i=1}^N F_i u_0(t)\frac{\mathbbm{1}_{E_i}(y)}{|E_i|^{1/2}} \bigg|^2\,\diff{\sigma(y)}\diff{t},
\end{split}
\end{equation}
where we carried the summation inside the square in the last equality using the fact that $|E_i\cap E_j|=0$ for $i\neq j$. The previous formula shows that $\|F u_0\|_{\lc L^2(0,T)\rc^N}$ is given by $\|P_{\cF}U u_0\|_{L^2(0,T;L^2(\Sigma))}$, $P_{\cF}$ being the orthogonal projection on $L^2(0,T;\cP_{\cF})$, where $\cP_{\cF}$ is the space of piecewise-constant functions in the spatial variable with respect to $\cF=\{E_1,\dots,E_N\}$. Explicitly, we have that
\begin{equation}
\label{eq:projection}
    P_{\cF}U u_0(t,y) = \sum_{i=1}^N F_i u_0(t) \frac{\mathbbm{1}_{E_i}(y)}{|E_i|^{1/2}}.
\end{equation}
This implies that, up to an isomorphism, the forward map $F$ can be identified with $P_{\cF}U$, where $P_{\cF}$ is a projection operator on $L^2(0,T;L^2(\Sigma))$. The map $U$ is referred to as the \textit{natural forward map} in \cite[Section 2.4]{split2}.

The assumptions to be verified in order to apply the nonuniform bound case in \cite[Theorem 2.4]{split2} are summarized in what follows.
\begin{enumerate}
    \item[$(a)$] There exists constants $c,C>0$ such that the following stability property holds:
    \begin{equation}
    \label{eq:weak-quasi-diag}
        \begin{split}
            &\|F u_0\|_{(L^2(0,T))^N}^2 \leq C \|u_0\|_{L^2(K)}^2,\qquad u_0\in\cH_1,\\
            &\frac{c}{2}\|u_0\|_{L^2(K)}^2 \leq \|F u_0\|_{(L^2(0,T))^N}^2,\qquad u_0\in\cM_{\leq j_0}.
        \end{split}
    \end{equation}
    \item[$(b)$] There exists a constant $B\geq 1$ such that the following \textit{coherence bounds} holds:
    \begin{equation}
    \label{eq:coherence_bounds}
        \|F_i \phi_{j,n}\|_{L^2(0,T)} \leq B\sqrt{\nu_i},\qquad i\in\{1,\dots,N\},\ (j,n)\in\Lambda_{\leq j_0}.
    \end{equation}
\end{enumerate}
We now state a version of the abstract result \cite[Theorem 2.4]{split2} in the nonuniform bound case adapted to the present context.
\begin{theorem}
\label{thm:split2}
Consider the previous setting. Fix $j_0\in\bN$ and $\gamma\in(0,1)$. Consider:
\begin{itemize}
    \item a signal $u_0\in\cH_1$;
    \item i.i.d.\ samples $i_1,\dots,i_m\in\{1,\dots,N\}$ from $\nu=(\nu_1,\dots,\nu_N)$;
    \item measurements $\tilde{y}_k=F_{i_k}u_0+\tilde{\varepsilon}_k$, $k=1,\dots,m$, for some $\tilde{\varepsilon}_k\in\cH_2$;
    \item a sparsity level $s\geq 3$.
\end{itemize}
Moreover, suppose that the following assumptions are satisfied:
\begin{itemize}
    \item the noise $\tilde{\varepsilon}_k$ satisfies
    \begin{equation}
    \label{eq:noise-ass}
        \max_{k=1,\dots,m} \|\nu_k^{-1/2} \tilde{\varepsilon}_k\|_{\cH_2} \leq \tilde{\beta}
    \end{equation}
    for some $\tilde{\beta}\geq 0$;
    \item the truncation error $u_R\coloneqq P_{\leq j_0}^{\perp}u_0$ satisfies
    \begin{equation}
    \label{eq:truncation-error}
        \max_{i=1,\dots,N} \nu_i^{-1/2} \|F_i u_R\|_{\cH_2} \leq \tilde{r},\qquad
        \|u_R\|_{\cH_1} \leq \tilde{r}
    \end{equation}
    for some $\tilde{r}\geq 0$.
\end{itemize}
There exists constants $C_0,C_1,C_2,C_3'>0$, depending only on the constants $c,C>0$ in \eqref{eq:weak-quasi-diag}, such that the following holds. Let
\begin{align*}
    m \geq C_0 \tau \max\{\log^3{\tau}\log{|\Lambda_{\leq j_0}|},\log(1/\gamma)\},\qquad \tau=B^2 s.
\end{align*}
Let $\widehat{u}$ be a solution of
\begin{align*}
    \min_{u\in\cM_{\leq j_0}} \|\Phi{u}\|_{\ell^1}\colon\qquad 
    \frac{1}{m}\sum_{k=1}^m \nu_k^{-1} \|F_{i_k} u_0-y_k\|_{\cH_2}^2 \leq (\tilde{\beta}+C_3'\tilde{r})^2.
\end{align*}
Then, with probability greater than $1-\gamma$, the following recovery estimate holds:
\begin{equation}
\label{eq:minimization-split2}
    \|u_0-\widehat{u}\|_{\cH_1} \leq C_1 \frac{\sigma_s(P_{\leq j_0} u_0)}{\sqrt{s}} + C_2(\tilde{\beta}+\tilde{r}).
\end{equation}
\end{theorem}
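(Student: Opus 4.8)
The plan is to obtain Theorem~\ref{thm:split2} as a direct specialization of the abstract nonuniform recovery result \cite[Theorem 2.4]{split2}, so that the proof reduces to matching the objects of the present section with those of the abstract framework and checking that the standing hypotheses $(a)$ and $(b)$ supply exactly the quantities required there. First I would recall the abstract setup of \cite[Theorem 2.4]{split2}: two Hilbert spaces $\cH_1,\cH_2$, an orthonormal (or Riesz) analysis dictionary $(\phi_{j,n})$ in $\cH_1$ with its scale decomposition $\cM_j$, a measure space $(\cD,\mu)$, a family of bounded measurement operators $(F_t)_{t\in\cD}\colon\cH_1\to\cH_2$, and a sampling probability $\nu$ with density $f_\nu$ with respect to $\mu$. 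In the present situation $\cD=\{1,\dots,N\}$ with $\mu$ the counting measure, $f_\nu(i)=\nu_i=|E_i|/|\Sigma|$ as in \eqref{eq:probability}, $\cH_2=L^2(0,T)$, the dictionary is the orthonormal wavelet system of Sec.~\ref{sec:wavelets}, and $F_i$ is the operator in \eqref{eq:measurement_operators_definition}. Internally, \cite[Theorem 2.4]{split2} establishes a restricted-isometry-type property for the subsampled operator $u\mapsto(\nu_{i_k}^{-1/2}F_{i_k}u)_k$ on $\cM_{\leq j_0}$ via concentration and chaining arguments and then feeds this into a standard $\ell^1$-recovery estimate; none of that needs to be reproduced here.

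Next I would verify, one by one, that the hypotheses of \cite[Theorem 2.4]{split2} are met. The quasi-diagonalization bound --- upper boundedness of $F$ on all of $\cH_1$ together with a lower bound of $F$ restricted to $\cM_{\leq j_0}$ --- is exactly \eqref{eq:weak-quasi-diag}; note that the balancing property which in \cite{split2} controls the discretization of the natural forward map $U$ into the truncated map $F$ has already been absorbed into the lower bound of $(a)$, since \eqref{eq:forward_norm}--\eqref{eq:projection} identify $\|Fu_0\|_{(L^2(0,T))^N}$ with $\|P_{\cF}Uu_0\|_{L^2(0,T;L^2(\Sigma))}$. The coherence bound of \cite[Theorem 2.4]{split2}, namely $\|F_t\phi_{j,n}\|_{\cH_2}\le B\sqrt{f_\nu(t)}$, is precisely \eqref{eq:coherence_bounds}. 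The noise condition \eqref{eq:noise-ass} and the truncation-error condition \eqref{eq:truncation-error} are, after the identification $\nu_i=f_\nu(i)$, the verbatim hypotheses used in \cite[Theorem 2.4]{split2}, and the $\ell^1$ program in the statement is the abstract one with feasible set $\cM_{\leq j_0}$ and constraint radius $(\tilde{\beta}+C_3'\tilde{r})^2$. I would then invoke \cite[Theorem 2.4]{split2} to read off the sample-complexity condition (keeping $\log|\Lambda_{\leq j_0}|$ unsimplified, as in the statement) and the recovery estimate \eqref{eq:minimization-split2}, with constants $C_0,C_1,C_2,C_3'$ depending only on $c,C$ from \eqref{eq:weak-quasi-diag}.

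The part requiring genuine care --- as opposed to transcription --- is making sure that the abstract theorem, formulated for a general (possibly continuous) measure space $(\cD,\mu)$ carrying a density $f_\nu$, degenerates correctly to the present finite, counting-measure setting, and that each weighting by $\nu_i^{-1/2}$ appearing in \eqref{eq:noise-ass}, \eqref{eq:truncation-error} and in the data-fidelity constraint is consistent with the normalization $F_i=|E_i|^{1/2}M_iU$ of \eqref{eq:measurement_operators_definition}. I expect this bookkeeping to be the main (and essentially only) obstacle: once the dictionary, the operators $(F_i)$, and the sampling law $\nu$ are correctly aligned with \cite{split2}, the conclusion is immediate.
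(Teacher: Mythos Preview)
Your proposal is correct and matches the paper's treatment exactly: the paper does not prove Theorem~\ref{thm:split2} independently but presents it as a restatement of \cite[Theorem 2.4]{split2} in the nonuniform-bound case, specialized to the present finite setting $\cD=\{1,\dots,N\}$ with counting measure and $f_\nu(i)=\nu_i$. The only additional remark the paper makes is to invoke \cite[Remark 2.5]{split2} to justify that the weaker condition \eqref{eq:weak-quasi-diag} (in place of the full quasi-diagonalization hypothesis) suffices --- a point you also address when noting that the balancing property has been absorbed into the lower bound of $(a)$.
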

As noticed in \cite[Remark 2.5]{split2}, condition \eqref{eq:weak-quasi-diag}, which is slightly weaker than the quasi-diagonalization condition in the statement of \cite[Theorem 2.4]{split2}, is sufficient for the theorem to hold.

Using \cite[Proposition 2.12]{split2}, we can reduce condition $(a)$ to the following two conditions.
\begin{itemize}
    \item[$(a1)$] There exists constants $c,C>0$ such that the following stability property holds:
    \begin{equation}
    \label{eq:quasi-diagonalization}
        c\|u_0\|_{L^2(K)}^2 \leq 
        \|U u_0\|_{L^2(0,T;L^2(\Sigma))}^2 \leq C\|u_0\|_{L^2(K)}^2,\quad u_0\in\cH_1,
    \end{equation}
    where we recall that $\cH_1$ was defined in \eqref{eq:definitionH1}.
    \item[$(a2)$] The projection $\cP_{\cF}$ satisfies the following \textit{balancing property}:
    \begin{align*}
        \|(I-\cP_{\cF})U u_0\|_{L^2(0,T;L^2(\Sigma))}^2 \leq \theta^2 \|u_0\|_{L^2(K)}^2,\qquad u_0\in \cM_{\leq j_0}
    \end{align*}
    for $\theta=\sqrt{c/2}$, where $c$ is the constant in \eqref{eq:quasi-diagonalization}.
\end{itemize}

Therefore, the main part of the proof of Theorem~\ref{thm:main_result} is to prove conditions $(a1)$, $(a2)$ and $(b)$. Condition $(a1)$ is satisfied by Assumption~\ref{ass:stability}; the results corresponding to assumptions $(a2)$ and $(b)$ are stated in Proposition~\ref{prop:fond_prop1} and \ref{prop:fond_prop2}, respectively, and are proved below.
\begin{proposition}
\label{prop:fond_prop1}
Let $\Sigma\subset\bR^3$ be a smooth embedded surface such that $d(K,\Sigma)>0$, where $K$ is as in \eqref{eq:Kdef}. Suppose that Assumption~\ref{ass:stability} and \eqref{eq:ass_detectors} are satisfied. Let $\theta>0$. Then there exists $\mu>0$, depending only on $\theta$, on the wavelet basis, on $\Sigma$, on $d(K,\Sigma)$, on the constants $c,C$ in Assumption~\ref{ass:stability}, and on $C_{\mathrm{ecc}}$, such that, if the following condition holds
\begin{equation}
\label{eq:fond_prop1_ass_det}
    \diam_{\bR^3}(E_i) \leq \mu 2^{-j_0},\qquad i=1,\dots,N,
\end{equation}
for some $j_0\in\bN$, then the following inequality holds:
\begin{equation}
\label{eq:forw_stab}
    \|(I-\cP_{\cF}) U u_0\|_{L^2(0,T;L^2(\Sigma))} \leq \theta\|u_0\|_{L^2(K)},\qquad u_0\in\cM_{\leq j_0}.
\end{equation}
\end{proposition}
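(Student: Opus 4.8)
The plan is to reduce \eqref{eq:forw_stab} to two ingredients: a purely geometric approximation-by-piecewise-constants estimate on $\Sigma$, controlled by the detector size and valid on $L^2(0,T;H^{1/2}(\Sigma))$; and an interpolation bound that converts the $H^{1/2}(\Sigma)$-norm of $Uu_0$ into $2^{j_0/2}\|u_0\|_{L^2}$ when $u_0\in\cM_{\leq j_0}$. A final smallness choice of $\mu$ then closes the argument.

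First I would prove that for every $v\in L^2(0,T;H^{1/2}(\Sigma))$,
\[
    \|(I-\cP_{\cF})v\|_{L^2(0,T;L^2(\Sigma))}^2 \leq
    C_{\mathrm{ecc}}^2\,\big(\max_i\diam_{\bR^3}(E_i)\big)\,\|v\|_{L^2(0,T;H^{1/2}(\Sigma))}^2 ,
\]
following the refinement of \cite{rondi,hyvonen}. On a fixed time slice this is a Poincaré-type estimate applied detector by detector: since $\cP_{\cF}v$ restricted to $E_i$ is the mean $\overline{v}_{E_i}$ of $v$ over $E_i$, Jensen's inequality gives $\|v-\overline{v}_{E_i}\|_{L^2(E_i)}^2\le|E_i|^{-1}\iint_{E_i\times E_i}|v(x)-v(y)|^2\,\diff\sigma(x)\diff\sigma(y)$, and bounding $|x-y|^{3}$ by $\diam_{\bR^3}(E_i)^{3}$ inside the Gagliardo double integral (here $n=2$, $s=1/2$, so $n+2s=3$) yields $\|v-\overline{v}_{E_i}\|_{L^2(E_i)}^2\le\diam_{\bR^3}(E_i)^3|E_i|^{-1}[v]_{H^{1/2}(E_i)}^2$. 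The uniform eccentricity hypothesis \eqref{eq:ass_detectors} gives $\diam_{\bR^3}(E_i)^3|E_i|^{-1}\le C_{\mathrm{ecc}}^2\diam_{\bR^3}(E_i)$, and since the sets $E_i\times E_i$ are pairwise essentially disjoint in $\Sigma\times\Sigma$, summing over $i$ replaces $\sum_i[v]_{H^{1/2}(E_i)}^2$ by the global seminorm, which is controlled by $\|v\|_{H^{1/2}(\Sigma)}^2$; integrating in $t$ gives the displayed bound. The delicate point here — which is the main obstacle of the whole proof — is the localisation of the nonlocal $H^{1/2}(\Sigma)$ seminorm to the detectors on the curved manifold $\Sigma$: one must compare intrinsic and ambient distances and make sure the constant depends only on $C_{\mathrm{ecc}}$ and on $\Sigma$, and in particular is uniform in the fineness of the partition.

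Next I would apply this with $v=Uu_0$. By complex interpolation between the upper $L^2$ and $H^1$ bounds of Assumption~\ref{ass:stability} — this is Lemma~\ref{lem:H12_stability}, whose proof is deferred to the appendix — one obtains $\|Uu_0\|_{L^2(0,T;H^{1/2}(\Sigma))}\le C\|u_0\|_{H^{1/2}(\bR^3)}$ for $u_0\in C^2_{K'}(\bR^3)$, hence by density for $u_0\in\cM_{\leq j_0}$. For such $u_0$, only wavelet scales $j\le j_0$ occur, so the Littlewood–Paley equivalence \eqref{eq:litt_paley} with $s=1/2$ yields the Bernstein-type inequality $\|u_0\|_{H^{1/2}(\bR^3)}\le C\,2^{j_0/2}\|u_0\|_{L^2(\bR^3)}$ (the low-frequency term contributes a constant, each high-frequency term picks up a factor $2^{j}\le 2^{j_0}$). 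Chaining the three estimates gives, for $u_0\in\cM_{\leq j_0}$,
\[
    \|(I-\cP_{\cF})Uu_0\|_{L^2(0,T;L^2(\Sigma))}
    \le C\,C_{\mathrm{ecc}}\,\big(\max_i\diam_{\bR^3}(E_i)\big)^{1/2}2^{j_0/2}\|u_0\|_{L^2(K)} .
\]

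Finally, under hypothesis \eqref{eq:fond_prop1_ass_det} the factor $\big(\max_i\diam_{\bR^3}(E_i)\big)^{1/2}2^{j_0/2}$ is bounded by $\mu^{1/2}$, so the right-hand side is at most $C\,C_{\mathrm{ecc}}\,\mu^{1/2}\|u_0\|_{L^2(K)}$. It then suffices to choose $\mu$ small enough that $C\,C_{\mathrm{ecc}}\,\mu^{1/2}\le\theta$; tracing the constants, $\mu$ depends only on $\theta$, on the wavelet basis (through the Littlewood–Paley constants), on $\Sigma$, on $d(K,\Sigma)$, on $T$, on the constants $c,C$ in Assumption~\ref{ass:stability}, and on $C_{\mathrm{ecc}}$, exactly as claimed, and \eqref{eq:forw_stab} follows. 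Apart from the geometric localisation estimate flagged above, every step is routine: interpolation on the manifold side and a standard wavelet Bernstein inequality on the signal side.
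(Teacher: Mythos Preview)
Your proposal is correct and matches the paper's proof essentially step for step: Jensen's inequality on each detector, multiplying and dividing by $|x-y|^3$ to produce the Gagliardo integrand, the eccentricity bound $\diam_{\bR^3}(E_i)^3/|E_i|\le C_{\mathrm{ecc}}^2\diam_{\bR^3}(E_i)$, summation via $\sum_i\mathbbm{1}_{E_i\times E_i}\le\mathbbm{1}_{\Sigma\times\Sigma}$, then Lemma~\ref{lem:H12_stability} and the Littlewood--Paley/Bernstein bound $\|u_0\|_{H^{1/2}}\lesssim 2^{j_0/2}\|u_0\|_{L^2}$. The localisation issue you flag is in fact routine in the paper's treatment, since both $\diam_{\bR^3}(E_i)$ and the Gagliardo seminorm are taken with respect to the ambient Euclidean distance, so no intrinsic/ambient comparison is needed.
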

In the proof of Proposition~\ref{prop:fond_prop1}, we need stability estimates with respect to the $H^{1/2}$ norm, which we state in the next Lemma. The proof of this result is provided in Appendix~\ref{appendix:stab_est}.
\begin{lemma}
\label{lem:H12_stability}
Let $\Sigma\subset\bR^3$ be a smooth embedded surface and let $K$ be a fixed compact set with $d(K,\Sigma)>0$. Suppose that Assumption~\ref{ass:stability} holds. Then there a constant $C_1>0$, depending only on $\Sigma$ and on $d(K,\Sigma)$, such that the following holds for $u_0\in C_K^2(\bR^3)$:
\begin{align*}
    &\|U u_0\|_{L^2(0,T;H^{1/2}(\Sigma))} \leq C_1\|u_0\|_{H^{1/2}(\bR^3)}.
\end{align*}
\end{lemma}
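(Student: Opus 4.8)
The plan is to obtain the $H^{1/2}$ estimate by complex interpolation between the two upper bounds of Assumption~\ref{ass:stability}, namely $\|U u_0\|_{L^2(0,T;L^2(\Sigma))}\le C\|u_0\|_{L^2(\bR^3)}$ and $\|U u_0\|_{L^2(0,T;H^1(\Sigma))}\le C\|u_0\|_{H^1(\bR^3)}$. The one structural nuisance is that these bounds are only available for data supported inside the auxiliary set $K'$, so a direct interpolation of $U$ on the couple $\big(L^2(K'),H_0^1(K')\big)$ would land, by the Lions--Magenes phenomenon, in $H^{1/2}_{00}(K')$ rather than in $H^{1/2}(\bR^3)$ on the domain side. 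I would circumvent this by prepending a cutoff: fix $\chi\in C^\infty_c(\mathrm{int}\,K')$ with $\chi\equiv 1$ on $K$, write $M_\chi$ for multiplication by $\chi$, and set $T\coloneqq U\circ M_\chi$. Since $M_\chi$ is bounded $L^2(\bR^3)\to L^2(K')$ and, by the Leibniz rule, bounded $H^1(\bR^3)\to H_0^1(K')$, and since $C_{K'}^2(\bR^3)$ is dense in $H_0^1(K')$ --- so that the $H^1$ bound of Assumption~\ref{ass:stability} extends by continuity to all of $H_0^1(K')$, as observed right after that assumption --- the operator $T$ is bounded both as $T\colon L^2(\bR^3)\to L^2(0,T;L^2(\Sigma))$ and as $T\colon H^1(\bR^3)\to L^2(0,T;H^1(\Sigma))$.

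Next I would apply the complex interpolation theorem to $T$ with parameter $\theta=1/2$. On the domain side, $[L^2(\bR^3),H^1(\bR^3)]_{1/2}=H^{1/2}(\bR^3)$ is classical (e.g.\ via the Fourier characterization of $H^s$ as a weighted $L^2$ space). On the target side I would invoke two standard facts: complex interpolation commutes with the Bochner space $L^2(0,T;\cdot)$, so that $[L^2(0,T;L^2(\Sigma)),L^2(0,T;H^1(\Sigma))]_{1/2}=L^2\big(0,T;[L^2(\Sigma),H^1(\Sigma)]_{1/2}\big)$; and $[L^2(\Sigma),H^1(\Sigma)]_{1/2}=H^{1/2}(\Sigma)$, which for a smooth closed surface --- in particular for the sphere of Proposition~\ref{lem:L2_stability} --- follows from a partition-of-unity reduction to the Euclidean case (no boundary subtleties arise here, since $\Sigma$ has no boundary). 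This yields that $T\colon H^{1/2}(\bR^3)\to L^2(0,T;H^{1/2}(\Sigma))$ is bounded, with operator norm controlled by the geometric mean of the two endpoint norms and the interpolation constants of the two couples.

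To conclude, note that for $u_0\in C_K^2(\bR^3)$ one has $M_\chi u_0=u_0$, since $\chi\equiv 1$ on $K\supseteq\supp u_0$; hence $U u_0=T u_0$, and the bound of the previous step gives $\|U u_0\|_{L^2(0,T;H^{1/2}(\Sigma))}\le C_1\|u_0\|_{H^{1/2}(\bR^3)}$, which is the claim. An equivalent route avoids the cutoff altogether: one interpolates $U$ directly on $\big(L^2(K'),H_0^1(K')\big)$, obtaining $U\colon H^{1/2}_{00}(K')\to L^2(0,T;H^{1/2}(\Sigma))$, and then uses that for $u_0$ supported in $K\subset\subset\mathrm{int}\,K'$ the norms $\|u_0\|_{H^{1/2}_{00}(K')}$ and $\|u_0\|_{H^{1/2}(\bR^3)}$ are comparable. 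In either case, the only step that is not purely routine is the identification of the interpolation space on the surface side; crucially, no information about the wave equation beyond Assumption~\ref{ass:stability} enters, which is precisely why the $1/2$-derivative loss of the trivial trace bound recalled in the introduction does not appear.
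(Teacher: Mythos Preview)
Your argument is correct and follows the same strategy as the paper: prepend a smooth cutoff supported in $K'$ and equal to $1$ on $K$, then apply complex interpolation between the $L^2$ and $H^1$ upper bounds of Assumption~\ref{ass:stability}, and finally remove the cutoff using $\chi u_0=u_0$ for $u_0\in C_K^2(\bR^3)$. The only difference is in execution: the paper localizes $\Sigma$ via a finite atlas and partition of unity and pulls back to $\bR^2$, so that the interpolation step is the Euclidean one $[L^2(\bR^2),H^1(\bR^2)]_{1/2}=H^{1/2}(\bR^2)$, whereas you invoke the manifold identification $[L^2(\Sigma),H^1(\Sigma)]_{1/2}=H^{1/2}(\Sigma)$ and the commutation of complex interpolation with $L^2(0,T;\cdot)$ directly as standard facts---both routes are valid, yours being the more streamlined and the paper's the more self-contained.
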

Here $H^{1/2}(\Sigma)$ denotes the corresponding Sobolev–Slobodeckij space -- we refer the reader to \cite{valdinoci}, for instance, for a definition.
\begin{proof}[Proof of Proposition~\ref{prop:fond_prop1}]
Recall formula \eqref{eq:projection} for $P_{\cF}U$. We now prove that, if \eqref{eq:fond_prop1_ass_det} holds for some $\mu>0$, then the following holds for $u_0\in\cM_{\leq j_0}$:
\begin{equation}
\label{eq:fond_est_prop1}
    \|(I-P_{\cF})U u_0\|_{L^2(0,T;L^2(\Sigma))}^2 \leq C'\mu \|u_0\|_{L^2(K)}^2,
\end{equation}
for some constant $C'>0$ which depends only on the wavelet basis, on $\Sigma$, on $d(K,\Sigma)$ and on $C_{\mathrm{ecc}}$. Choosing $\mu\coloneqq \frac{\theta^2}{C'}$ will then imply \eqref{eq:forw_stab}, which concludes the proof.

We proceed with the proof of \eqref{eq:fond_est_prop1}. We have that
\begin{align*}
    &\|U u_0 - P_{\cF}U u_0\|_{L^2(0,T;L^2(\Sigma))}^2 \\ &\hspace{5mm}=
    \sum_{i=1}^N \int_0^T \int_{E_i} \bigg| 
U u_0(t,y)-\frac{1}{|E_i|}\int_{E_i} U u_0(t,x)\,\diff{\sigma(x)} \bigg|^2\,\diff{\sigma(y)}\diff{t}.
\end{align*}
We will estimate each term separately. We have that
\begin{align*}
    &\int_0^T \int_{E_i} \bigg| 
    U u_0(t,y)-\frac{1}{|E_i|}\int_{E_i} U u_0(t,x)\,\diff{\sigma(x)} \bigg|^2\,\diff{\sigma(y)}\diff{t} \\ &\hspace{5mm}=
    \int_0^T \int_{E_i} \bigg| \int_{E_i} 
    \lc U u_0(t,y)-U u_0(t,x) \rc\,\frac{\diff{\sigma(x)}}{|E_i|} \bigg|^2\,\diff{\sigma(y)}\diff{t} \\ &\hspace{5mm}\leq
    \frac{1}{|E_i|}\int_0^T \int_{E_i\times E_i}
    |U u_0(t,y)-U u_0(t,x)|^2\,\diff{(\sigma\otimes\sigma)(x,y)}\diff{t} \\ &\hspace{5mm}=
    \frac{1}{|E_i|}\int_0^T \int_{E_i\times E_i} |x-y|^3
    \frac{|U u_0(t,y)-U u_0(t,x)|^2}{|x-y|^3}\,\diff{(\sigma\otimes\sigma)(x,y)}\diff{t} \\ &\hspace{5mm}\leq
    \frac{\lc\diam_{\bR^3}(E_i)\rc^3}{|E_i|} \int_0^T \int_{E_i\times E_i}
    \frac{|U u_0(t,y)-U u_0(t,x)|^2}{|x-y|^3}\,\diff{(\sigma\otimes\sigma)(x,y)}\diff{t}.
\end{align*}
We now use assumption \eqref{eq:ass_detectors} and \eqref{eq:fond_prop1_ass_det} to deduce that
\begin{align*}
    &\int_0^T \int_{E_i} \bigg| 
    U u_0(t,y)-\frac{1}{|E_i|}\int_{E_i} U u_0(t,x)\,\diff{\sigma(x)} \bigg|^2\,\diff{\sigma(y)}\diff{t} \\ &\hspace{5mm}\leq C_{\mathrm{ecc}}^2 \mu 2^{-j_0} \int_0^T \int_{\Sigma\times \Sigma} \mathbbm{1}_{E_i\times E_i}(x,y)
    \frac{|U u_0(t,y)-U u_0(t,x)|^2}{|x-y|^3}\,\diff{(\sigma\otimes\sigma)(x,y)}\diff{t}.
\end{align*}
Notice that the following inequality holds $(\diff{\sigma}\otimes\diff{\sigma})$-a.e.:
\begin{align*}
    \sum_{i=1}^N \mathbbm{1}_{E_i\times E_i} \leq \mathbbm{1}_{\Sigma\times\Sigma}.
\end{align*}
Therefore, summing over $i$, we get
\begin{align*}
    &\|U u_0 - P_{\cF}U u_0\|_{L^2(0,T;L^2(\Sigma))}^2 \\ &\hspace{5mm}\leq
    C_{\mathrm{ecc}}^2 \mu 2^{-j_0} \int_0^T \int_{\Sigma\times \Sigma}
    \frac{|U u_0(t,y)-U u_0(t,x)|^2}{|x-y|^3}\,\diff{(\sigma\otimes\sigma)(x,y)}\diff{t} \\ &\hspace{5mm}\leq
    C_{\mathrm{ecc}}^2\mu 2^{-j_0}\|U u_0\|_{L^2(0,T;H^{1/2}(\Sigma))}^2.
\end{align*}
Using the estimate from Lemma~\ref{lem:H12_stability}, we conclude that
\begin{align*}
    \|U u_0 - P_{\cF}U u_0\|_{L^2(0,T;L^2(\Sigma))}^2 &\leq 
    C_{\mathrm{ecc}}^2 C_1^2 \mu 2^{-j_0} \|u_0\|_{H^{1/2}(\bR^3)}^2.
\end{align*}
The Littlewood-Paley property \eqref{eq:litt_paley} implies that
\begin{align*}
    \|u_0\|_{H^{1/2}(\bR^3)}^2 \leq C'' 2^{j_0} \|u_0\|_{L^2(\bR^3)}^2.
\end{align*}
for some constant $C''>0$ depending only on the wavelet dictionary. We finally conclude that
\begin{align*}
    \|U u_0 - P_{\cF}U u_0\|_{L^2(0,T;L^2(\Sigma))}^2 \leq
    C_{\mathrm{ecc}} C_1^2 C'' \mu \|u_0\|_{L^2(\bR^3)}^2,
\end{align*}
which proves \eqref{eq:fond_est_prop1}.
\end{proof}
The coherence bounds in \eqref{eq:coherence_bounds} follow from the next Proposition.
\begin{proposition}
\label{prop:fond_prop2}
Let $\Sigma\subset\bR^3$ be a smooth embedded surface such that $d(K,\Sigma)>0$, where $K$ is as in \eqref{eq:Kdef}. Then there exists a constant $B_0\geq 1$, depending only on $T$, on the wavelet basis, on $d(K,\Sigma)$ and on $C_{\mathrm{ecc}}$, such that
\begin{align*}
    \|F_i \phi_{j,n}\|_{L^2(0,T)} \leq B_0 |E_i|^{1/2},\qquad i=1,\dots,N.
\end{align*}
In particular, \eqref{eq:coherence_bounds} is satisfied with $B=B_0 |\Sigma|^{1/2}$.
\end{proposition}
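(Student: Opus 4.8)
\emph{Proof strategy.} The plan is to estimate $\|F_i\phi_{j,n}\|_{L^2(0,T)}$ directly from Kirchhoff's formula \eqref{eq:kirchoff}, exploiting two features special to the three-dimensional wave equation: the $1/t^2$ decay of the Kirchhoff kernel, which is harmless since $d(y,K)\ge d(K,\Sigma)>0$ for every $y\in\Sigma$; and the sharp Huygens principle, which forces $U\phi_{j,n}(y,\cdot)$ to vanish outside a time interval of length at most $\diam(\supp\phi_{j,n})$. A more naive route fails: Cauchy--Schwarz applied to the spatial average in \eqref{eq:measurement_operators_definition} together with Assumption~\ref{ass:stability} would only give $\|F_i\phi_{j,n}\|_{L^2(0,T)}\le C$, whereas we must gain the factor $|E_i|^{1/2}$, and this forces a genuinely pointwise-in-space bound on $U\phi_{j,n}$.

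First I would record the scaling of the tensorized wavelets: there is a constant $C_{\mathrm{w}}$, depending only on $\psi,\phi$, such that for all $(j,n)\in\Gamma$ (with the convention that $j=0$ labels the scaling functions, where $2^{-j}=1$) one has $\diam(\supp\phi_{j,n})\le C_{\mathrm{w}}2^{-j}$, $\|\phi_{j,n}\|_{L^\infty}\le C_{\mathrm{w}}2^{3j/2}$ and $\|\nabla\phi_{j,n}\|_{L^\infty}\le C_{\mathrm{w}}2^{5j/2}$. Since $\psi,\phi\in C^2$ we have $\phi_{j,n}\in C_K^2(\bR^3)$, so \eqref{eq:kirchoff} applies and $U\phi_{j,n}\in C^2(\Sigma\times[0,T])$. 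For fixed $y\in\Sigma$, the Kirchhoff integrand is carried by $\partial B_t(y)\cap\supp\phi_{j,n}$, which is nonempty only for $d(y,\supp\phi_{j,n})\le t\le d(y,\supp\phi_{j,n})+\diam(\supp\phi_{j,n})$; in particular $t\ge d_0:=d(K,\Sigma)$ and the admissible set of times has measure $\le C_{\mathrm{w}}2^{-j}$. An elementary spherical-cap estimate gives $\sigma(\partial B_t(y)\cap\supp\phi_{j,n})\le C_\ast(\diam\supp\phi_{j,n})^2\le C_\ast C_{\mathrm{w}}^2\,2^{-2j}$ with $C_\ast$ absolute; bounding $t\le T$ in the gradient term of \eqref{eq:kirchoff} then yields
\begin{equation*}
|U\phi_{j,n}(y,t)|\;\le\;\frac{C_\ast C_{\mathrm{w}}^3}{4\pi d_0^2}\,2^{-2j}\bigl(2^{3j/2}+T\,2^{5j/2}\bigr)\;\le\;\frac{C'(1+T)}{d_0^2}\,2^{j/2},\qquad y\in\Sigma,\ t\in[0,T].
\end{equation*}

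Combining this with the fact that $U\phi_{j,n}(y,\cdot)$ is supported in a set of measure $\le C_{\mathrm{w}}2^{-j}$, the powers of $2^j$ cancel and I obtain, uniformly over $(j,n)\in\Gamma$ and $y\in\Sigma$,
\begin{equation*}
\int_0^T|U\phi_{j,n}(y,t)|^2\,\diff t\;\le\;C_{\mathrm{w}}2^{-j}\Bigl(\tfrac{C'(1+T)}{d_0^2}\Bigr)^2 2^{j}\;=:\;B_0^2,
\end{equation*}
where $B_0$ depends only on $T$, the wavelet basis and $d(K,\Sigma)$ (enlarging it if needed so that $B_0\ge 1$). Applying Cauchy--Schwarz to the spatial average in \eqref{eq:measurement_operators_definition} and then Fubini gives $\|F_i\phi_{j,n}\|_{L^2(0,T)}^2\le\int_{E_i}\int_0^T|U\phi_{j,n}(y,t)|^2\,\diff t\,\diff\sigma(y)\le B_0^2|E_i|$, which is the asserted bound; since $\nu_i=|E_i|/|\Sigma|$, this reads $\|F_i\phi_{j,n}\|_{L^2(0,T)}\le B_0|\Sigma|^{1/2}\sqrt{\nu_i}$, so \eqref{eq:coherence_bounds} holds with $B=B_0|\Sigma|^{1/2}$. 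The one delicate point --- and really the substantive content of the statement --- is the bookkeeping showing that the $2^{j/2}$ growth of the pointwise amplitude is exactly offset by the $2^{-j}$ shrinkage of the time support, so that $B_0$ is independent of the scale $j$, hence of $j_0$; this is the quantitative form of the ``low directional sensitivity'' of high-frequency wavelets noted after Theorem~\ref{thm:new-result}. Incidentally, the geometry of $\Sigma$ never enters beyond $d(K,\Sigma)$, since the spheres $\partial B_t(y)$ in \eqref{eq:kirchoff} are Euclidean spheres in $\bR^3$, not subsets of $\Sigma$.
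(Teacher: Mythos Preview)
Your proof is correct and follows essentially the same route as the paper's: Kirchhoff's formula combined with the wavelet $L^\infty$ scaling, a spherical-cap area estimate $|\partial B_t(y)\cap\supp\phi_{j,n}|\lesssim 2^{-2j}$, and the sharp Huygens principle to confine the time support to an interval of length $\lesssim 2^{-j}$, after which Cauchy--Schwarz on the detector average yields the factor $|E_i|^{1/2}$. The only cosmetic difference is that the paper carries the time-indicator $D_{j,n}(x,t)$ through the computation explicitly (and spells out the spherical-cap bound via a near/far case distinction), whereas you first package everything into the uniform bound $\int_0^T|U\phi_{j,n}(y,t)|^2\,\diff t\le B_0^2$ and then integrate over $E_i$; your observation that $C_{\mathrm{ecc}}$ does not actually enter the argument is also correct.
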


\begin{proof}[Proof of Proposition~\ref{prop:fond_prop2}]
Let $u_{j,n}\in C^2(\bR^3\times[0,+\infty))$ denote the solution of \eqref{eq:waveIVP} corresponding to the initial datum $\phi_{j,n}\in C_K^2(\bR^3)$; in other words, using the notation from Section~\ref{sec:freespace}, we have that $u_{j,n}\coloneqq V \phi_{j,n}$.

Notice from \eqref{eq:wav_def} that the following estimates hold, where the implicit constants depend only on the wavelet dictionary:
\begin{equation}
\label{eq:wav_scaling}
    \|\phi_{j,n}\|_{L^{\infty}(\bR^3)} \lesssim 2^{3/2 j},\qquad \|\nabla{\phi_{j,n}}\|_{L^{\infty}(\bR^3)} \lesssim 2^{5/2 j},\qquad (j,n)\in\Gamma.
\end{equation}
Moreover, there exists a constant $C>0$ such that, for every $(j,n)\in\Gamma$, there exists a ball $Q_{j,n}$ of radius $C2^{-j}$ such that
\begin{align*}
    \supp(\phi_{j,n})\subset Q_{j,n}.
\end{align*}
Let $x_{j,n}$ be the center of $Q_{j,n}$. We adopt the following notation for $x\in\bR^3$ and $t\geq 0$:
\begin{align*}
    d_{j,n}(x) \coloneqq \|x-x_{j,n}\|,\qquad
    D_{j,n}(x,t) \coloneqq \mathbbm{1}_{[d_{j,n}(x)-C2^{-j},d_{j,n}(x)+C2^{-j}]}(t).
\end{align*}
We now prove that, for $x\in\bR^3$ and $t\geq0$, the following holds for some constant $C'>0$ depending only on $C$:
\begin{equation}
\label{eq:estimate_bt_qjn}
    |\partial B_t(x)\cap Q_{j,n}| \leq
    C' 2^{-2j} D_{j,n}(x,t).
\end{equation}
Indeed, clearly $\partial B_t(x) \cap Q_{j,n} = \emptyset$ if $t<d_{j,n}(x)-C2^{-j}$ or $t>d_{j,n}(x)+C2^{-j}$, and thus $|\partial B_t(x) \cap Q_{j,n}| = 0$ in this case. Therefore, to prove \eqref{eq:estimate_bt_qjn} we just need to find an upper bound of $|\partial B_t(x) \cap Q_{j,n}|$ for $t\in[d_{j,n}(x)-C2^{-j},d_{j,n}(x)+C2^{-j}]$. We distinguish two cases.
\begin{figure}[t!]
    \centering
    \includegraphics[width=0.4\textwidth]{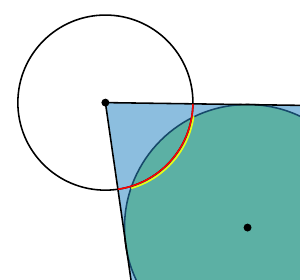}
    \put(-119,106){\scalebox{0.8}{$x$}}
    \put(-103,86){\scalebox{0.9}{$C_{j,n}$}}
    \put(-85,15){\scalebox{0.9}{$Q_{j,n}$}}
    \put(-30,36){\scalebox{0.8}{$x_{j,n}$}}
    \put(-72,140){\scalebox{0.9}{$\partial B_t(x)$}}
    \caption{The sets in the proof of Proposition~\ref{prop:fond_prop2}. In red, the set $\partial B_t(x)\cap C_{j,n}$; in yellow, the set $\partial B_t(x)\cap Q_{j,n}$.}
    \label{fig:balls}
\end{figure}
\begin{enumerate}
    \item Suppose that $d_{j,n}(x)\leq C2^{-j}$. Then, for $t\leq d_{j,n}(x)+C2^{-j}\leq 2C2^{-j}$,
    \begin{align*}
        |\partial B_t(x)\cap Q_{j,n}| \leq |\partial B_t(x)| = 4\pi t^2 \leq 16\pi C^2 2^{-2j}.
    \end{align*}
    \item Suppose that $d_{j,n}(x)>C2^{-j}$. Let $C_{j,n}$ be the simple cone -- considered as a $3$ dimensional solid -- with vertex $x$ whose boundary is tangent to $Q_{j,n}$ -- see Fig.~\ref{fig:balls}. Then the cross-section of $C_{j,n}$ subtends an angle $2\theta$, where
    \begin{align*}
        \sin{\theta} = \frac{C 2^{-j}}{d_{j,n}(x)}.
    \end{align*}
    The set $\partial B_t (x)\cap C_{j,n}$ is a spherical cap, whose area is
    \begin{align*}
        |\partial B_t (x)\cap C_{j,n}| = 2\pi(1-\cos{\theta})t^2 =
        2\pi\lc 1-\sqrt{1-\frac{C^2 2^{-2j}}{d_{j,n}(x)^2}} \rc t^2.
    \end{align*}
    As $1-\sqrt{1-x}\leq x$, we get that, for $t\leq d_{j,n}(x)+C2^{-j}\leq 2d_{j,n}(x)$,
    \begin{align*}
        |\partial B_t(x)\cap Q_{j,n}| &\leq
        |\partial B_t(x)\cap C_{j,n}| \leq
        2\pi \frac{C^2 2^{-2j}}{d_{j,n}(x)^2} t^2 \leq
        8\pi C^2 2^{-2j}.
    \end{align*}
\end{enumerate}
This proves \eqref{eq:estimate_bt_qjn}. Equation \eqref{eq:estimate_bt_qjn} and \eqref{eq:wav_scaling} imply the following estimates:
\begin{align*}
    &\int_{\partial B_t(x)} |\phi_{j,n}(y)|\,\diff{\sigma(y)} =
    \int_{\partial B_t(x)\cap Q_{j,n}} |\phi_{j,n}(y)|\,\diff{\sigma(y)} \leq
    C'' 2^{-j/2} D_{j,n}(x,t),\\
    &\int_{\partial B_t(x)} |\nabla{\phi_{j,n}}(y)|\,\diff{\sigma(y)} =
    \int_{\partial B_t(x)\cap Q_{j,n}} |\nabla{\phi_{j,n}}(y)|\,\diff{\sigma(y)} \leq
    C'' 2^{j/2} D_{j,n}(x,t),
\end{align*}
where $C''$ depends only on the wavelet dictionary and on $C'$.

For $t<d(K,\Sigma)$, we get that $u_{j,n}(\cdot,t)=0$ on $\Sigma$; for $t\geq d(K,\Sigma)$ and $x\in\Sigma$, using the explicit formula \eqref{eq:kirchoff} for the solution of \eqref{eq:waveIVP}, we get that
\begin{align*}
    |u_{j,n}(x,t)| &\leq
    \frac{1}{4\pi t^2} \int_{\partial B_t(x)} \lc|\phi_{j,n}(y)|+|\nabla{\phi_{j,n}}(y)| |y-x|\rc\,\diff{\sigma(y)} \\ &\leq
    \frac{1}{4\pi t^2} \lc \int_{\partial B_t(x)}|\phi_{j,n}(y)|\,\diff{\sigma(y)}+
    \diam_{\bR^3}(\partial B_t(x))\int_{\partial B_t(x)} |\nabla{\phi_{j,n}(y)}|\,\diff{\sigma(y)}\rc \\ &\leq
    \frac{1}{4\pi t^2} \lc 1+\diam_{\bR^3}(\partial B_t(x)) \rc C'' 2^{j/2} D_{j,n}(x,t) \\ &\leq
    \frac{1+2T}{4\pi d(K,\Sigma)^2} C'' 2^{j/2} D_{j,n}(x,t) \eqqcolon C''' 2^{j/2} D_{j,n}(x,t).
\end{align*}
We deduce that
\begin{align*}
    \|F_i \phi_{j,n}\|_{L^2(0,T)}^2 &=
    \int_0^T \bigg| \frac{1}{|E_i|^{1/2}}\int_{E_i} u_{j,n}(x,t)\,\diff{\sigma(x)} \bigg|^2\,\diff{t} \\ &\leq
    \frac{(C''')^2 2^{j}}{|E_i|} \int_0^T \bigg| \int_{E_i}
    D_{j,n}(x,t)\,\diff{\sigma(x)} \bigg|^2\,\diff{t} \\ &\leq
    (C''')^2 2^{j} \int_{E_i} \int_0^T |D_{j,n}(x,t)|^2\,\diff{t}\diff{\sigma(x)} \leq
    2(C''')^2 C |E_i|,
\end{align*}
and we conclude.
\end{proof}

\begin{proof}[Proof of Theorem~\ref{thm:main_result}]
The idea is to apply Theorem~\ref{thm:split2}. We have already argued that conditions $(a1)$, $(a2)$ and $(b)$ are satisfied thanks to Assumption~\ref{ass:stability}, Proposition~\ref{prop:fond_prop1} and Proposition~\ref{prop:fond_prop2}, respectively.

We now establish a correspondence between the measurements and the noise in the setting of Theorem~\ref{thm:main_result} with that of Theorem~\ref{thm:split2}. In Theorem~\ref{thm:split2}, the measurements are given by
\begin{align*}
    \tilde{y}_k = F_{t_k} u_0 + \tilde{\varepsilon}_k,
\end{align*}
where $F_{t_k}\in\cL(\cH_1,\cH_2)$ is the measurement operator corresponding to the sample $t_k\in\cD$ and $\tilde{\varepsilon}_k\in\cH_2$ is an additive noise. In Theorem~\ref{thm:main_result}, instead, the measurements are given by
\begin{align*}
    y_k(t) &= \frac{1}{|E_{i_k}|} \int_{E_{i_k}}U u_0(y,t)\,\diff{\sigma}(y)+\varepsilon_k(t) \\ &=
    |E_{i_k}|^{-1/2} \lc F_{i_k,T} u_0 + |E_{i_k}|^{1/2}\varepsilon_k(t) \rc.
\end{align*}
Therefore the measurements and the noise in the two settings are related by the identities
\begin{equation}
\label{eq:equivalence_measurements}
    y_k = |E_{i_k}|^{-1/2} \tilde{y_k},\qquad
    \tilde{\varepsilon}_k = |E_{i_k}|^{1/2}\varepsilon_k.
\end{equation}
Given this correspondence, we can now verify the assumptions on the noise and on the truncation errors. If $\|\varepsilon_k\|_{L^2(0,T)}\leq \beta$, then $\|\tilde{\varepsilon}_k\|_{L^2(0,T)}\leq |E_{i_k}|^{1/2}\beta$; therefore, condition \eqref{eq:noise-ass} is satisfied in our case with $\tilde{\beta}=|\Sigma|^{1/2}\beta$.

The assumptions on the truncation error are given by \eqref{eq:truncation-error}. In our case, the measurement operators are uniformly bounded, as they are given by $F_i =f_{E_i} \circ U$ where $f_{E_i}$ is the operator of unit norm defined by
\begin{align*}
    f_{E_i}(v) \coloneqq \frac{1}{|E_i|^{1/2}} \int_{E_i} v,\qquad v\in L^2(0,T;L^2(\Sigma)).
\end{align*}
Therefore, we get that, if $\|P_{\leq j_0}^{\perp} u_0\|_{\cH_1}\leq r$, then
\begin{align*}
    \sup_{i=1,\dots,N} \nu_i^{-1/2} \|F_i  P_{\leq j_0}^{\perp} u_0\|_{(L^2(0,T))^N} \leq 
    |\Sigma|^{1/2} (\min_i |E_i|)^{-1/2} \|U\| r.
\end{align*}
We conclude that \eqref{eq:truncation-error} is satisfied with $\tilde{r}=\max\{1,|\Sigma|^{1/2} \|U\| (\min_i |E_i|)^{-1/2}\}$. Notice that $\|U\|$ depends only on $T$ and on $\Sigma$.

Finally, notice that in Theorem~\ref{thm:split2} the minimization problem to be solved is \eqref{eq:minimization-split2}. In the PAT case, $\nu_k^{-1}=|\Sigma| |E_{i_k}|^{-1}$; therefore, the identity \eqref{eq:equivalence_measurements} implies that this problem is equivalent in our setting to
\begin{align*}
    \min_{u\in\cM_{\leq j_0}} \|\Phi{u}\|_1 \colon\quad
    \frac{1}{m}\sum_{k=1}^m \bigg\| \frac{1}{|E_{i_k}|}\int_{E_{i_k}}U u_0(y,\cdot)\,\diff{\sigma}(y)-y_k \bigg\|_{L^2(0,T)}^2 \leq C_3 (\beta+\tilde{r})^2,
\end{align*}
where $C_3= C_3'/|\Sigma|$, which is exactly \eqref{eq:minimization_problem}.

Having established the correspondence of Theorem~\ref{thm:split2} with Theorem~\ref{thm:main_result} in our setting, the result is proved.
\end{proof}

\subsection{Proof for general sensing patterns}
\label{sec:proof-new-result}

In this section, we outline a sketch of the proof of Theorem~\ref{thm:new-result}. Essentially, the proof follows along the lines of the proof of Theorem~\ref{thm:main_result}. The idea is always to apply Theorem~\ref{thm:split2} and verify that conditions $(a1),(a2),(b)$ stated in Sec.~\ref{sec:proof-main-result} hold for suitable measurement operators and forward maps.

In this case, we define the measurement operators as
\begin{align*}
    \tilde{F}_{i,T} u_0 \coloneqq \frac{1}{\sqrt{N}} P_i AMU u_0.
\end{align*}
The corresponding forward map is then given by
\begin{align*}
    \tilde{F}_T u_0 \coloneqq \frac{1}{\sqrt{N}} AMU u_0.
\end{align*}
Using the fact that $A$ is a unitary matrix, we get that
\begin{align*}
    \|\tilde{F}_T u_0\|_{(L^2(0,T))^N}^2 &= \|\tilde{F}_T u_0\|_{L^2(0,T;\ell^2(\bC^N))}^2 = 
    \frac{1}{N}\int_0^T |AMU u_0(t)|^2 \,\diff{t} \\ &=
    \frac{1}{N}\int_0^T |MU u_0(t)|^2 \,\diff{t} =
    \frac{1}{N}\|MU u_0\|_{(L^2(0,T))^N}^2.
\end{align*}
Explicitly, the operator on the RHS is given by
\begin{align*}
    \lc \frac{1}{\sqrt{N}} MU u_0(t) \rc_i = 
    \frac{1}{\sqrt{N|E_i|}} \frac{1}{\sqrt{|E_i|}} \int_{E_i} U u_0(t,y)\,\diff{\sigma}(y) = \frac{1}{\sqrt{N|E_i|}} F_i u_0(t).
\end{align*}
Using \eqref{eq:quasi-unif}, we get that, up to an isomorphism, $\tfrac{1}{\sqrt{N}} MU u_0(t)$ is given by the forward map from Sec.~\ref{sec:proof-main-result}. Therefore, we conclude that condition $(a)$ in this case follows as in the case of Theorem~\ref{thm:main_result}.

Moreover, condition $(b)$ follows directly from \eqref{eq:new-coherence}, with $\nu_i\equiv 1/\sqrt{N}$. The other verifications concerning assumptions on the noise and on the truncation error follow exactly as in the proof of Theorem~\ref{thm:main_result}, leading to the validation of the hypotheses of Theorem~\ref{thm:split2}, which can then be applied to prove Theorem~\ref{thm:new-result}.

\section*{Acknowledgments}

Co-funded by the European Union (ERC, SAMPDE, 101041040, and Next Generation EU). Views and opinions expressed are however those of the authors only and do not necessarily reflect those of the European Union or the European Research Council. Neither the European Union nor the granting authority can be held responsible for them. The author is member of the ``Gruppo Nazionale per l’Analisi Matematica, la Probabilità e le loro Applicazioni'', of the ``Istituto Nazionale di Alta Matematica''. The research was supported in part by the MIUR Excellence Department Project awarded to Dipartimento di Matematica, Università di Genova, CUP D33C23001110001.

I would also like to thank Prof.\ Giovanni S.\ Alberti and Prof.\ Matteo Santacesaria, for introducing me to photoacoustic tomography and for their careful revision of the manuscript, and Prof.\ Rakesh, for the insightful discussion regarding stability estimates for inverse problems in hyperbolic PDEs.

\bibliography{refs}{}
\bibliographystyle{plain}

\appendix

\section{Interpolation estimate}
\label{appendix:stab_est}
In this section, we provide a proof of the interpolation estimate from Lemma~\ref{lem:H12_stability}.
\begin{proof}[Proof of Lemma~\ref{lem:H12_stability}]
The idea of the proof is to use the complex interpolation method to obtain stability with respect to $H^{1/2}$ norms using stability estimates in the $L^2$ and $H^1$ norms, respectively, from Assumption~\ref{ass:stability}.

Let $K'$ be the compact set from Assumption~\ref{ass:stability}; recall that $$K\subset\mathrm{int}(K').$$ We also fix a finite atlas of $\Sigma$, given by $(V_i,\varphi_i)_{i=1}^n$, and a partition of unity $(\eta_i)_{i=1}^n\subset C^{\infty}(\Sigma)$ subordinate to it. From Assumption~\ref{ass:stability}, we get that, for $i=1,\dots,n$ and $u_0\in C^2_{K'}(\bR^3)$,
\begin{equation}
\label{eq:H1stability_eq1}
    \int_0^{T} \int_{\Sigma} |\eta_i U u_0(y,t)|^2\,\diff{\sigma(y)}\diff{t} \leq \|U u_0\|_{L^2(0,T;L^2(\Sigma))}^2 \leq C^2 \|u_0\|_{L^2(\bR^3)}^2,
\end{equation}
and also
\begin{equation}
\label{eq:H1stability_eq2}
\begin{split}
    &\int_0^{T} \int_{\Sigma} |\nabla_{\Sigma}\lc\eta_i U u_0\rc(y,t)|^2\,\diff{\sigma(y)}\diff{t} \\ &\hspace{5mm}\leq
    2\int_0^{T} \int_{\Sigma} |\nabla_{\Sigma}{\eta_i}|^2 |U u_0(y,t)|^2\,\diff{\sigma(y)}\diff{t} \\ &\hspace{10mm}+
    2\int_0^{T} \int_{\Sigma} |\nabla{V u_0(y,t)}|^2\,\diff{\sigma(y)}\diff{t} \\ &\hspace{5mm}\leq
    2 C^2\|\nabla_{\Sigma}{\eta_i}\|_{L^{\infty}(\bR^3)}^2 \|u_0\|_{L^2(\bR^3)}^2+
    2 C_1^2 \|u_0\|_{H^1(\bR^3)}^2 \\ &\hspace{5mm}\leq
    C_2^2 \|u_0\|_{H^1(\bR^3)}^2,
\end{split}
\end{equation}
where $C_2\coloneqq \sqrt{2}\big( C^2 \max_i\|\nabla_{\Sigma}{\eta_i}\|_{L^{\infty}(\bR^3)}^2 + C_1^2 \big)^{1/2}$; the meaning of $\nabla_{\bR^3}$ and $\nabla_{\Sigma}$ is the same as in Sec.~\ref{sec:spherical}. We now have that
\begin{align*}
    &\int_0^{T} \int_{\Sigma} |\eta_i U u_0(y,t)|^2\,\diff{\sigma(y)}\diff{t} \\ &\hspace{5mm}=
    \int_0^T \int_{\varphi_i(V_i)} |\eta_i U u_0 (\varphi_i^{-1}(x),t)|^2 |\det(D\varphi_i^{-1})(x)|\,\diff{x}\diff{t} \\ &\hspace{5mm}\geq
    C_3^2 \int_0^T \int_{\varphi_i(V_i)} |\eta_i U u_0 (\varphi_i^{-1}(x),t)|^2\,\diff{x}\diff{t} \\ &\hspace{5mm}=
    C_3^2 \|(\eta_i U u_0)\circ(\varphi_i^{-1}\otimes I)\|_{L^2(0,T;L^2(\bR^2))}^2.
\end{align*}
where $C_3\coloneqq \min_{ \substack{x\in\varphi_i(V_i) \\ i=1,\dots,n}} |\det(D\varphi_i^{-1})(x)|^{1/2}$. This, together with \eqref{eq:H1stability_eq1}, implies that, for $i=1,\dots,n$ and $u_0\in C^2_{K'}(\bR^3)$,
\begin{equation}
\label{eq:H1stability_eq3}
    \|(\eta_i U u_0)\circ(\varphi_i^{-1}\otimes I)\|_{L^2(0,T;L^2(\bR^2))} \leq C_4 \|u_0\|_{L^2(\bR^3)},
\end{equation}
where $C_4\coloneqq C/C_3$.

Analogously, using \eqref{eq:H1stability_eq2}, we get that, for $i=1,\dots,n$ and $u_0\in C^2_{K'}(\bR^3)$,
\begin{align*}
    \|\big(\nabla_{\Sigma}(\eta_i U u_0)\big)\circ(\varphi_i^{-1}\otimes I)\|_{L^2(0,T;L^2(\bR^2))} \leq C_5 \|u_0\|_{H^1(\bR^3)},
\end{align*}
where $C_5\coloneqq C_2/C_3$. For $v\in C^2_{V_i}(\Sigma)$, we have that
\begin{align*}
    |\nabla_{\Sigma} (v\circ\varphi_i^{-1})(x)| = 
    |(D\varphi_i^{-1})^*(x)(\nabla_{\Sigma}v)(\varphi^{-1}(x))| \leq
    C_6 |(\nabla_{\Sigma}v)(\varphi^{-1}(x))|,
\end{align*}
where $C_6\coloneqq \max_{ \substack{x\in\varphi_i(V_i) \\ i=1,\dots,n}} \|D\varphi_i^{-1}(x)\|$; here the matrix norm $\|\cdot\|$ refers to the operator norm on $\ell^2(\bR^2)$. This implies that, for $i=1,\dots,n$ and $u_0\in\cD_{K_{\varepsilon}}(\bR^3)$,
\begin{equation}
\label{eq:H1stability_eq4}
\begin{split}
    &\|(\eta_i U u_0)\circ(\varphi_i^{-1}\otimes I)\|_{L^2(0,T;H^1(\bR^2))}^2 \\ &\hspace{5mm}=
    \|(\eta_i U u_0)\circ(\varphi_i^{-1}\otimes I)\|_{L^2(0,T;L^2(\bR^2))}^2 \\ &\hspace{10mm}+
    \|\nabla\big((\eta_i U u_0)\circ(\varphi_i^{-1}\otimes I)\big)\|_{L^2(0,T;L^2(\bR^2))}^2 \\ &\hspace{5mm}\leq
    C_4^2 \|u_0\|_{L^2(\bR^3)}^2 + C_5^2 C_6^2 \|u_0\|_{H^1(\bR^3)}^2 \leq
    C_7^2 \|u_0\|_{H^1(\bR^3)}^2.
\end{split}
\end{equation}
where $C_7\coloneqq \big( C_4^2+C_5^2 C_6^2 \big)^{1/2}$.

Let $\eta\in C^2(\bR^3)$ be a smooth cutoff such that $\eta\in[0,1]$, $\eta|_K\equiv 1$ and $\supp(\eta)\subset K'$. Given $u_0\in C^{2}(\bR^3)$, we have that $\eta u_0\in C^2_{K'}(\bR^3)$. For every $i=1,\dots,n$, we consider the following linear operators
\begin{align*}
    T_i u_0 \coloneqq \big(\eta_i U (\eta u_0)\big)\circ(\varphi_i^{-1}\otimes I),
\end{align*}
defined for $u_0\in C^2(\bR^3)$. The stability estimates obtained in \eqref{eq:H1stability_eq3} and \eqref{eq:H1stability_eq4} imply that
\begin{align*}
    &\|T_i u_0\|_{L^2(0,T;L^2(\bR^2))} \leq C_4 \|\eta u_0\|_{L^2(\bR^3)} \leq C_4 \|u_0\|_{L^2(\bR^3)},\\
    &\|T_i u_0\|_{L^2(0,T;H^1(\bR^2))} \leq C_7 \|\eta u_0\|_{H^1(\bR^3)} \leq C_7 C_8 \|u_0\|_{H^1(\bR^3)},
\end{align*}
where $C_8$ depends only on $\eta$.\footnote{Notice that it would not have been possible to obtain the bound $\|\eta u_0\|_{H^1(\bR^3)}\lesssim \|u_0\|_{H^1(\bR^3)}$ for a non-smooth cutoff $\eta$; this motivates the necessity of considering the slightly bigger set $K'$.} These estimates imply that $T_i$ can be extended by density and continuity to a bounded linear operator $T_i\colon L^2(\bR^3)\rightarrow L^2(0,T;L^2(\bR^2))$ such that $T_i(H^1(\bR^3))\subset L^2(0,T;H^1(\bR^2))$.

We can now apply the complex interpolation method \cite[Theorem 4.1.2, Theorem 6.4.5]{bergh2012interpolation} to conclude that
\begin{align*}
    \|T_i u_0\|_{L^2(0,T;H^{1/2}(\bR^2))} \leq C_9 \|u_0\|_{H^{1/2}(\bR^3)},
\end{align*}
where $C_9$ can be chosen, for instance, to be $\max(C_4,C_7 C_8)$. This implies that, for $u_0\in C_K^2(\bR^3)$,
\begin{align*}
    \|\big(\eta_i U u_0\big)\circ(\varphi_i^{-1}\otimes I)\|_{L^2(0,T;H^{1/2}(\bR^2))} \leq C_9 \|u_0\|_{H^{1/2}(\bR^3)},
\end{align*}
as in this case $\eta u_0 = u_0$.

By definition of the $H^{1/2}(\Sigma)$ norm, there exists a constant $C_{10}>0$, depending only on the atlas of $\Sigma$, such that, for $i=1,\dots,n$ and for $u_0\in C_K^2(\bR^3)$,
\begin{align*}
     \|\eta_i U u_0\|_{L^2(0,T;H^{1/2}(\Sigma))} &\leq C_{10}\|\big(\eta_i U u_0\big)\circ(\varphi_i^{-1}\otimes I)\|_{L^2(0,T;H^{1/2}(\bR^2))} \\ &\leq
     C_9 C_{10} \|u_0\|_{H^{1/2}(\bR^3)}.
\end{align*}
We conclude that, for $u_0\in C_K^2(\bR^3)$,
\begin{align*}
    \|U u_0\|_{L^2(0,T;H^{1/2}(\Sigma))} \leq
    \sum_{i=1}^n \|\eta_i U u_0\|_{L^2(0,T;H^{1/2}(\Sigma))} \leq
    nC_9 C_{10} \|u_0\|_{H^{1/2}(\bR^3)}.
\end{align*}
This concludes the proof.
\end{proof}

\end{document}